\documentclass[11pt]{article}
\usepackage[english,activeacute]{babel}
\usepackage{amsmath,amsfonts,amssymb,amstext,amsthm,amscd,mathrsfs,amsbsy}
\usepackage{graphics,graphicx}
\usepackage{xypic}


\newtheorem{teo}{Theorem}[section]
\newtheorem{pro}[teo]{Proposition}
\newtheorem{coro}[teo]{Corollary}
\newtheorem{lem}[teo]{Lemma}

\theoremstyle{definition}
\newtheorem{defi}[teo]{Definition}
\newtheorem{exam}[teo]{Example}
\newtheorem{rem}[teo]{Remark}

\newcommand{\T}{\mathbb T}

\newcommand{\N}{\mathbb N}
\newcommand{\Z}{\mathbb Z}
\newcommand{\Q}{\mathbb Q}
\newcommand{\R}{\mathbb R}
\newcommand{\C}{\mathbb C}

\newcommand{\Sp}{\mbox{Spec }}
\newcommand{\m}{\mathfrak m}

\newcommand{\Su}{k[x^{a_1},\ldots,x^{a_s}]}
\newcommand{\Po}{k[x_1,\ldots,x_d]}
\newcommand{\Na}{Nash_n(X)}
\newcommand{\Nn}{\overline{Nash_n(X)}}

\newcommand{\GF}{GF(J_n)}

\newcommand{\sd}{\check{\sigma}}
\newcommand{\Si}{\mbox{Sing}}


\textheight 20cm \textwidth 14cm \oddsidemargin 1.5cm \evensidemargin 1cm

\begin{document}

\date{}

\author{Daniel Duarte\footnote{Research supported by CONACYT (M\'{e}xico).}\\
Institut de Math\'ematiques de Toulouse\\
118 route de Narbonne, F-31062 Toulouse Cedex 9\\
dduarte@math.univ-toulouse.fr}
\title{Higher Nash blowup on normal toric varieties}

\maketitle

\begin{abstract}
The higher Nash blowup of an algebraic variety replaces singular points with limits of
certain spaces carrying higher order data associated to the variety at non-singular points.
In the case of normal toric varieties we give a combinatorial
description of the higher Nash blowup in terms of a Gr\"obner fan.
This description will allow us to prove the analogue of Nobile's theorem on the usual
Nash blowup in this context. More precisely, we prove that for a normal toric variety,
the higher Nash blowup is an isomorphism if and only if the variety is non-singular.
\end{abstract}


%

\section*{Introduction}

The classical Nash blowup is a natural modification of an algebraic variety that replaces singular
points by limits of tangent spaces at non-singular points. Recently, Takehiko
Yasuda in \cite{Y} has generalized this construction by considering not only first-order data, as
with the tangent space, but also higher-order one. In his construction, instead of tangent spaces,
the author considers nth infinitesimal neighborhoods of non-singular points.
Then one replaces singular points by limits of these infinitesimal neighborhoods at non-singular points.
The resulting variety is called higher Nash blowup of order $n$ and is denoted by $\Na$. Yasuda then
conjectures that for $n\gg0$, $\Na$ is non-singular (\cite{Y}, Conjecture 0.2). If the conjecture is true,
this process would give resolution of singularities in one step.
\\
\\
In this paper we consider Yasuda's higher Nash blowup in the case of normal toric varieties. Let
$\sigma\subset\R^d$ be a strictly convex rational polyhedral cone, $X$ the associated normal toric variety,
and $\Nn$ the normalization of $\Na$. To begin with, we will see that $\Nn$ has a natural structure of toric variety
and so it is defined by some fan. Our first result shows that this fan can be identified with the Gr\"obner fan of
the ideal $J_n=\langle x^{a_1}-1,\ldots,x^{a_s}-1\rangle^{n+1}\subset\Su=k[\sd\cap\Z^d]$ (Theorem \ref{t. Nash = GF}).
This will be done essentially by comparing the action of the torus on the distinguished point of the dense orbit
of $\Nn$ and the induced action on the ideal $J_n$. By taking suitable limits, the same action will give us
the distinguished points of orbits in $\Nn$ and initial ideals of $J_n$.
\\
\\
The idea of comparing the fan defining $\Nn$ with a Gr\"obner fan is inspired by a similar idea that
appears in another paper of Yasuda in which the author defines a variant of $\Na$ in positive characteristic.
In the case of toric varieties, the author proves, using similar arguments, that this variant is determined by
a Gr\"obner fan (\cite{Y1}, Proposition 3.5). We also mention that a much more explicit combinatorial description
of the usual Nash blowup of toric varieties has been recently given by P. Gonz\'alez and B. Teissier in \cite{GT}
and by D. Grigoriev and P. Milman in \cite{GM}.
\\
\\
Later in the paper, we will study an analogue of the following well-known theorem of A. Nobile (\cite{No}): In characteristic zero,
the Nash blowup of a variety $X$ is an isomorphism if and only if $X$ is non-singular.
One can naturally ask if this theorem also holds for the higher Nash blowup. We answer this question affirmatively
when $X$ is a normal toric variety (Corollary \ref{c. Nobile without normalization}).
Using the description of $\Nn$ in terms of a Gr\"obner fan, the problem can be reduced to showing that this fan is a
non-trivial subdivision of the cone, say $\sigma$, defining $X$. By general results on the Gr\"obner fan, this is equivalent to
showing that there exists an element of some reduced Gr\"obner basis with the property that its initial part with respect to some
$w\in\sigma$ changes as we vary $w$ in $\sigma$.
\\
\\
The paper is organized as follows. In the first section we recall the basics of Gr\"obner bases but in a
slightly more general setting: instead of a polynomial ring we will consider a monomial subalgebra of the
polynomial ring. We give the definition of a Gr\"obner basis and Gr\"obner fan in this context and prove
some of their basic properties. Then, in the second and third section we prove, respectively, the description of $\Nn$
for normal toric varieties in terms of a Gr\"obner fan and the analogue of Nobile's theorem for normal toric varieties.

%

\section{Gr\"{o}bner fan of ideals in monomial subalgebras}\label{s. Groebner fan}

In this section we want to consider an intrinsic theory of Gr\"{o}bner bases of ideals in monomial
subalgebras of the polynomial ring. It can be verified that the basic theory of Gr\"{o}bner bases
(up to the existence and uniqueness of a reduced Gr\"obner basis) as shown, for example, in \cite{AL}
or \cite{CLO}, can be translated word by word to this setting. This is mainly because a completely
analogous division algorithm on any monomial subalgebra of the polynomial ring can be defined.


\subsection{Gr\"obner bases on $\Su$}

Let $\Su\subset\Po$ denotes the subalgebra generated by the monomials
$x^{a_i}:=x_1^{a_{i,1}}\cdot\ldots\cdot x_d^{a_{i,d}}$, where $a_i=(a_{i,1},\ldots,a_{i,d})\in\N^d$, and
$k$ is a field. Let $A:=\Z_{\geq0}(a_1,\ldots, a_s)=\{\sum_i\lambda_i a_i|\lambda_i\in\Z_{\geq0}\}$ denote
the semigroup generated by the $a_i's$. Define a monomial order on $\Su$ in the usual way. For instance,
any monomial order on $\Po$ restricts to a monomial order on $\Su$. However, the converse is not true.

\begin{exam}
Consider the subalgebra $k[x,xy]\subset k[x,y]$. Let $w=(\sqrt 3,-1)$. Define a monomial
order $\succ$ on the monomials of $k[x,xy]$ as follows:
$$x^ay^b\succ x^cy^d \Longleftrightarrow w\cdot(a,b)>w\cdot(c,d).$$
Suppose $\succ$ extends to a monomial order $\succ'$ on $k[x,y]$. Since, by definition, every monomial
$x^ay^b\in k[x,y]$ must satisfy $x^ay^b\succ'1$, then, in particular, we must have
$y\succ'1$. But then $x\cdot y\succ'1\cdot x=x$. Since $xy$ and $x$ are monomials on $k[x,xy]$ we
should have $xy\succ x$, which is clearly not true. Therefore, the monomial order $\succ$ cannot be
extended to $k[x,y]$.
\end{exam}

Let $>$ be a monomial order on $\Su$, $f=\sum_{i=1}^{r}\lambda_{\beta_i}x^{\beta_i}$ be a nonzero
polynomial in $\Su$, where $\beta_1>\beta_2>\cdots>\beta_r$. Define $lm(f):=x^{\beta_1}$, the leading
monomial of $f$; $lc(f):=\lambda_{\beta_1}$, the leading coefficient of $f$; $lt(f):=\lambda_{\beta_1}
\cdot x^{\beta_1}$, the initial form or leading term of $f$. In addition, we define $lm(0)=lc(0)=lt(0)=0$.
Finally, for $S\subset\Su$, we define the initial ideal of $S$, denoted $in_>(S)$, to be the ideal
generated (in $\Su$) by the leading terms of elements of $S$ with respect to $>$.

\begin{defi}
Fix a monomial order. A set of non-zero polynomials $G=\{g_1,\ldots,g_t\}$ contained in an ideal $I\subset\Su$,
is called a \textit{Gr\"{o}bner basis} for $I$ if for each $f\in I\setminus\{0\}$, there exists $i\in\{1,\ldots,t\}$
such that $lm(g_i)$ divides $lm(f)$ in $\Su$.
\end{defi}

\begin{defi}
A Gr\"{o}bner basis $G=\{g_1,\ldots,g_t\}$ is called \textit{reduced} if $lc(g_i)=1$ for all $i$, and no non-zero
monomial of $g_i$ is divisible by any $lt(g_j)$ for $j\neq i$.
\end{defi}

\begin{teo}
Fix a monomial order. Then every non-zero ideal $I$ has a unique reduced Gr\"{o}bner basis with respect
to this monomial order.
\end{teo}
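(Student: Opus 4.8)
The plan is to establish existence and uniqueness separately, following the template from the polynomial-ring case (as in \cite{AL} or \cite{CLO}), and verifying that each ingredient survives the passage from $\Po$ to the monomial subalgebra $\Su$. The essential point — already flagged in the paper's introduction to this section — is that a division algorithm on $\Su$ works exactly as in the polynomial ring: given a monomial order $>$, a divisor set $G=\{g_1,\ldots,g_t\}\subset\Su$, and $f\in\Su$, one repeatedly subtracts an appropriate monomial multiple $c\,x^\gamma g_i$ (with $x^\gamma\in\Su$, so $\gamma\in A$) whenever $lm(g_i)$ divides the current leading monomial \emph{inside} $\Su$; termination follows because the sequence of leading monomials strictly decreases under $>$ and a monomial order on $\Su$ is a well-order on $A$ (this last fact needs a one-line argument: $A\subset\N^d$ has no infinite strictly $>$-decreasing sequence, e.g.\ by Dickson's lemma applied to the ambient $\N^d$, or directly since $>$ refines divisibility).

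For existence, I would argue as follows. For the ideal $I\subset\Su$, form the initial ideal $in_>(I)$, which is a monomial ideal of $\Su$, i.e.\ generated by monomials $x^{\beta}$ with $\beta\in A$. By the Noetherianity of $\Su$ (it is a finitely generated $k$-algebra, hence Noetherian), $in_>(I)$ is finitely generated; moreover one can choose a finite \emph{monomial} generating set, $in_>(I)=\langle x^{\beta_1},\ldots,x^{\beta_t}\rangle$ with each $x^{\beta_j}=lm(g_j)$ for some $g_j\in I$. Then $G=\{g_1,\ldots,g_t\}$ is a Gr\"obner basis: for any $f\in I\setminus\{0\}$, $lm(f)\in in_>(I)$, so $lm(f)=x^\gamma\cdot lm(g_j)$ for some $j$ and some $x^\gamma\in\Su$, i.e.\ $lm(g_j)\mid lm(f)$ in $\Su$. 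To pass from an arbitrary Gr\"obner basis to a reduced one, I would run the standard cleanup: first discard any $g_i$ whose leading monomial is divisible by $lm(g_j)$ for some $j\neq i$ (this keeps a Gr\"obner basis with minimal, pairwise non-dividing leading monomials); then normalize each $lc(g_i)=1$; then replace each $g_i$ by its remainder on division by the others — the division algorithm on $\Su$ guarantees this remainder lies in $I$, has the same leading monomial as $g_i$, and has no term divisible by any $lt(g_j)$, $j\neq i$.

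For uniqueness, suppose $G=\{g_1,\ldots,g_t\}$ and $H=\{h_1,\ldots,h_r\}$ are both reduced Gr\"obner bases of $I$. First, the set of minimal generators of the monomial ideal $in_>(I)$ in $\Su$ is uniquely determined: a monomial ideal in $\Su$ has a unique minimal monomial generating set, namely the monomials in it that are not properly divisible by another monomial of the ideal. Since reducedness forces the $lm(g_i)$ (resp.\ $lm(h_j)$) to be exactly this minimal set, we get $t=r$ and, after reindexing, $lm(g_i)=lm(h_i)$ for all $i$. Now fix $i$ and consider $g_i-h_i\in I$. If $g_i\neq h_i$, then $g_i-h_i$ has a leading monomial $x^\delta$, which (being in $in_>(I)$) is divisible in $\Su$ by some $lm(g_j)=lm(h_j)$. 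But $x^\delta$ appears in $g_i$ or in $h_i$ as a non-leading monomial (the leading terms $lt(g_i)=lm(g_i)=lm(h_i)=lt(h_i)$ cancel), contradicting reducedness of whichever basis it comes from. Hence $g_i=h_i$ for all $i$.

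The main obstacle, such as it is, is not a deep difficulty but a verification burden: one must check that every lemma used implicitly — well-ordering of $>$ on the semigroup $A$, correctness and termination of the division algorithm on $\Su$, uniqueness of the minimal monomial generating set of a monomial ideal \emph{of $\Su$} (where ``monomial'' means supported on $A$, and divisibility is divisibility within $\Su$), and Noetherianity of $\Su$ — genuinely transfers from the polynomial ring. Each of these is straightforward, but the subtlety to watch is that ``$m$ divides $m'$ in $\Su$'' is strictly stronger than ``$m$ divides $m'$ in $\Po$'' (the quotient exponent vector must lie in $A$, not merely in $\N^d$), so one must be careful to use divisibility in $\Su$ consistently throughout; the example in the excerpt of $k[x,xy]\subset k[x,y]$ is precisely a warning that the subalgebra structure is not innocuous. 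With that caveat kept in mind, the polynomial-ring proof goes through verbatim.
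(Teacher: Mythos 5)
Your proposal is correct and takes essentially the same approach as the paper: the paper's proof is just the assertion that the polynomial-ring argument of \cite{AL} transfers verbatim to $\Su$ (resting on the division algorithm in the monomial subalgebra), and you carry out precisely that transfer, correctly insisting throughout that divisibility be taken inside $\Su$. The one imprecise point is your parenthetical justification of the well-ordering of a monomial order on the semigroup $A$: Dickson's lemma applied to the ambient $\N^d$ does not suffice, since componentwise divisibility need not be divisibility in $A$ (the very subtlety you flag later); instead write each exponent as $\sum_i\lambda_i a_i$ and apply Dickson's lemma to the vectors $(\lambda_1,\ldots,\lambda_s)\in\N^s$, which immediately rules out an infinite strictly decreasing chain.
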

\begin{proof}
As we said before, this can be proved in exactly the same way as in the polynomial ring case
(cf. \cite{AL}, Chapter 1).
\end{proof}


\subsection{Gr\"{o}bner fan}

The Gr\"obner fan of an ideal in $\Po$ is a subdivision of $\R^d_{\geq0}$ (see \cite{M}, Ch. 2, Def. 2.4.10).
Since we want to deal with monomial subalgebras, we will need to consider subdivisions of a little more general
cone in $\R^d$. In this section we give the analogous definition of Gr\"obner fan of an ideal in $\Su$. As
before, to prove that this Gr\"obner fan is indeed a fan, we can reproduce, word by word, the proof of the
polynomial ring case.
\\
\\
Let $\sd:=\R_{\geq0}(a_1,\ldots,a_s)\subset\R^d_{\geq0}$ be the cone generated by $a_1,\ldots,a_s$, and let
$\sigma\subset\R^d$ be its dual cone. Consider $w\in\sigma$, and $f=\sum c_ux^u\in\Su$. Define the initial form
$in_w(f)$ as the sum of terms $c_ux^u$ in $f$ with $w\cdot u$ maximized. The initial ideal of $I$ with respect to $w$
is defined as $in_w(I):=\langle in_w(f)|f\in I\rangle.$

\begin{pro}\label{p. C[w] cone-1}
Let $I$ be an ideal in $\Su$, let $w\in\sigma$ and consider
$$C[w]:=\{w'\in\sigma|in_w(I)=in_{w'}(I)\}.$$
Then $C[w]$ is the relative interior of a polyhedral cone inside $\sigma$.
\end{pro}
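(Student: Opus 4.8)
The plan is to mimic the classical argument showing that Gröbner regions decompose into relatively open polyhedral cones, adapting it to the subalgebra $\Su$. First I would pass to a finite generating set of $I$; in fact, since we only care about $in_w$ for $w\in\sigma$, I would use the reduced Gröbner basis $G_w=\{g_1,\dots,g_t\}$ of $I$ with respect to some monomial order refining the partial preorder given by $w$ (such a refinement exists because $w\in\sigma=\sd^{\vee}$ pairs nonnegatively with every exponent appearing, so it is compatible with a well-ordering of $A$). The key structural fact I would establish is that $in_w(I)=in_{w'}(I)$ precisely when $w$ and $w'$ select the same initial forms on each element of this one particular Gröbner basis $G_w$; this is the subalgebra analogue of the standard lemma (see \cite{M}, Ch.~2) and, as the paper already emphasizes, its proof transfers verbatim once one has the division algorithm and the uniqueness of reduced Gröbner bases, both available here.

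Granting that lemma, the description of $C[w]$ becomes explicit and combinatorial. For each $g_j=\sum_u c_{j,u}x^u\in G_w$, let $U_j$ be its support and let $u_j^\star\in U_j$ be the exponent maximizing $w\cdot u$ (the whole face of the Newton polytope of $g_j$ that is selected by $w$, really, but we may record it by the finitely many linear inequalities it imposes). Then I would show
\[
C[w]=\Big\{w'\in\sigma \;\Big|\; in_{w'}(g_j)=in_w(g_j)\text{ for all }j\Big\}
= \bigcap_{j=1}^{t}\Big\{w'\in\R^d \;\Big|\; w'\cdot u > w'\cdot u'\text{ for all }u\in F_j,\ u'\in U_j\setminus F_j\Big\}\ \cap\ \sigma,
\]
where $F_j\subset U_j$ is the set of exponents of $g_j$ lying in $in_w(g_j)$. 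Each condition $in_{w'}(g_j)=in_w(g_j)$ is a finite conjunction of strict linear inequalities $w'\cdot(u-u')>0$ (for $u\in F_j$, $u'\notin F_j$) together with equalities $w'\cdot(u-u'')=0$ (for $u,u''\in F_j$); intersecting finitely many such sets with the cone $\sigma$ yields the relative interior of a polyhedral cone. Concretely: the equalities cut out a linear subspace $L$, the non-strict versions of the strict inequalities together with the facet inequalities of $\sigma$ cut out a polyhedral cone $\tau\subset L\cap\sigma$, and $C[w]$ is exactly the relative interior of $\tau$ — the strictness of the inequalities is what forces "relative interior" rather than the closed cone, and one checks $w$ itself lies there, so $\tau$ is nonempty with $C[w]=\mathrm{relint}(\tau)$.

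The main obstacle, and the place where the subalgebra setting genuinely differs from $\Po$, is the justification that the defining lemma still holds — i.e.\ that equality of initial ideals is detected on a single finite Gröbner basis. The subtlety is that $in_w(I)$ need not be generated by $\{in_w(g_j)\}$ as an ideal of $\Su$ for an arbitrary $w\in\sigma$, because $w$ induces only a preorder, not a monomial order, on $\Su$; one needs to argue that a Gröbner basis with respect to a monomial order $>$ refining $w$ remains a $w$-homogeneous-style generating set for $in_w(I)$, which in turn relies on the division algorithm terminating — and this is exactly the point the paper has flagged as transferring word-for-word from \cite{AL}, \cite{CLO}. So my plan would be to invoke that transferred machinery explicitly: pick $>$ refining $w$, take $G_w$ reduced Gröbner w.r.t.\ $>$, show $in_w(I)=\langle in_w(g_1),\dots,in_w(g_t)\rangle$ by a standard normal-form argument, and then deduce the characterization of $C[w]$ above. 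Everything after that lemma is the routine polyhedral bookkeeping sketched in the previous paragraph, and I would not belabor it.
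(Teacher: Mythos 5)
Your proposal follows the same route as the paper's proof: characterize $C[w]$ via the reduced Gr\"obner basis of $I$ with respect to $>_w$ as the set of $w'$ selecting the same initial forms on each basis element, then write this condition as finitely many linear equalities and strict inequalities inside $\sigma$, which is by definition the relative interior of a polyhedral cone. The point you flag as the main obstacle (that equality of initial ideals is detected on this one finite basis, relying on the division algorithm in $\Su$) is exactly what the paper dispatches by citing the verbatim transfer from the polynomial-ring case, so your argument is correct and essentially identical.
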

\begin{proof}
As in the polynomial ring case, it can be checked that
\begin{align}
C[w]=\{w'\in\sigma|in_{w'}(g)=in_w(g),\mbox{ for all }g\in G\},
\end{align}
where $G=\{g_1,\ldots,g_r\}$ is the reduced Gr\"{o}bner basis of $I$ with respect to $>_w$. Here $>$
is any monomial order on $\Su$ and $>_w$ is defined as $x^u>_wx^v$ if $u\cdot w>v\cdot w$ or
$u\cdot w=v\cdot w$ and $u>v$. For $g_i\in G$,
write $g_i=\sum_j c_{ij}x^{a_{ij}}+\sum_j c_{ij}'x^{b_{ij}}$, where $in_w(g_i)=\sum_j c_{ij}x^{a_{ij}}$.
The proposition then follows because the right-hand side set of $(1)$ equals
\begin{align}
\{w'\in\sigma|w'\cdot a_{ij}=w'\cdot a_{ik},w'\cdot a_{ij}>w'\cdot b_{ik}\mbox{ for }i=1,\ldots,r,
\mbox{ and all } j,k\}.\notag
\end{align}
This is the relative interior of a polyhedral cone by definition. See \cite{St}, Ch. 2, Prop. 2.3, or
\cite{M}, Ch. 2, Prop. 2.4.6 for details.
\end{proof}

\begin{pro}
Let $\overline{C[w]}$ be the closure of $C[w]$ in $\R^d$. Then the set $GF(I):=\{\overline{C[w]}|w\in\sigma\}$
forms a polyhedral fan.
\end{pro}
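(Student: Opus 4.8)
The plan is to verify, for the collection $GF(I)$, the three defining properties of a polyhedral fan: it is finite, each of its members is a polyhedral cone, and the intersection of any two members is a common face of both; once these hold, the members of $GF(I)$ together with all of their faces form a fan (one should keep in mind that a proper face of a Gr\"obner cone need not itself be a Gr\"obner cone, so this completion is genuinely part of the statement). As the remarks preceding the proposition already indicate, every step is the verbatim transcription of the polynomial-ring argument in \cite{St}, Ch.~2, or \cite{M}, Ch.~2; the only genuinely new point is to check that the underlying tools — the division algorithm on $\Su$, reduced Gr\"obner bases, and the elementary behaviour of initial forms with respect to $w$ — carry over, which they do. To begin, $w\in C[w]\subseteq\overline{C[w]}\subseteq\sigma$ for every $w$, and by Proposition \ref{p. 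C[w] cone-1}, together with the fact that a polyhedral cone is closed and equals the closure of its relative interior, $\overline{C[w]}$ is exactly the polyhedral cone cut out of $\sigma$ by the linear equalities and non-strict inequalities read off from the reduced Gr\"obner basis of $I$ with respect to $>_w$. So each member of $GF(I)$ is a polyhedral cone.

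The engine of the proof is a refinement lemma for weight vectors, which I would establish just as in the polynomial case: for $w,v\in\sigma$ and all sufficiently small $\epsilon>0$ one has $w+\epsilon v\in\sigma$ and
$$in_{w+\epsilon v}(I)=in_v(in_w(I)).$$
For a single polynomial this is immediate, since a small enough $\epsilon$ cannot reverse the $w$-order of its finitely many exponents; one then lifts it to the ideal by running the division algorithm against a Gr\"obner basis of $I$ with respect to $>_w$.

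With this in hand I would prove finiteness as follows. Granting the $\Su$-analogue of the statement that an ideal has only finitely many initial monomial ideals (\cite{St}, Theorem~1.2, or \cite{M}; its proof is a Noetherian-induction argument that transcribes without change, applied both to $I$ and to the ideals $in_w(I)$), one notes that the full-dimensional members of $GF(I)$ are exactly the $\overline{C[w]}$ with $in_w(I)$ a monomial ideal, hence finitely many; and for arbitrary $w$, choosing $v$ generic makes $in_v(in_w(I))$ a monomial ideal, so by the refinement lemma $\overline{C[w]}$ is a face of the full-dimensional cone $\overline{C[w+\epsilon v]}$. Since finitely many polyhedral cones have finitely many faces in total, $GF(I)$ is finite. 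For the face and intersection axioms: given a face $F$ of $\overline{C[w]}$ and a point $v$ in its relative interior, $w+\epsilon v$ lies in $C[w]$ for small $\epsilon$, so the refinement lemma gives $in_w(I)=in_v(in_w(I))$, and comparing this with the inequality description of $C[w']$ as $w'$ varies over $F$ identifies $F$ with $\overline{C[v]}$. The intersection axiom is then formal: if $\overline{C[w]}\cap\overline{C[w']}\neq\emptyset$, pick $v$ in the relative interior of the intersection; by the previous point this intersection equals the smallest face of $\overline{C[w]}$ containing $v$, which equals $\overline{C[v]}$, and likewise for $\overline{C[w']}$, so it is a common face.

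I expect the bulk of the work to lie in the finiteness step — specifically in transferring to $\Su$ the finiteness theorem for initial monomial ideals — together with the combinatorial bookkeeping hidden in the refinement lemma and in matching up the inequality descriptions that make a face of a Gr\"obner cone again a Gr\"obner cone. Once those are secured, the fan axioms follow formally, exactly as in \cite{St} and \cite{M}.
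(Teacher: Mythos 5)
Your argument is correct and is essentially the paper's own: the paper proves this proposition simply by citing \cite{St}, Ch.~2, Prop.~2.4 and \cite{M}, Ch.~2, Prop.~2.4.9, having noted beforehand that the polynomial-ring proof transcribes word for word to $\Su$, and your proposal is precisely a write-up of that standard argument (refinement lemma $in_{w+\epsilon v}(I)=in_v(in_w(I))$, finiteness of monomial initial ideals, faces and intersections), checked to carry over to the monomial subalgebra with $\sigma$ playing the role of $\R^d_{\geq0}$.
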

\begin{proof}
See \cite{St}, Ch. 2, Prop. 2.4, or \cite{M}, Ch. 2, Prop. 2.4.9.
\end{proof}

\begin{defi}
The set $GF(I)$ is called the \textit{Gr\"{o}bner fan} of $I$.
\end{defi}

In order to compute examples of Gr\"obner bases of ideals in $\Su$, we use the so-called
\textit{extrinsic algorithm for computing intrinsic Gr\"obner bases} (see \cite{St}, Algorithm 11.24).

\begin{exam}
Let $J=\langle xy+x,x^3y^3+x^2y^3\rangle\subset k[x,xy,x^2y^3]$. Let $>$ be the lexicographic order.
Let $w=(1,1)$. Implementing the extrinsic algorithm in $\mathtt{SINGULAR}$ $\mathtt{3}$-$\mathtt{1}$-$\mathtt{6}$,
we obtain the following reduced Gr\"obner basis with respect to $>_{w}$ (the leading terms are listed first):
$\{xy+x,x^4+x^3,x^2y^3-x^3\}$. Therefore (see prop. \ref{p. C[w] cone-1}), $C[(1,1)]=\{(p,q)\in\sigma|q>0, p>0, 3q>p\}.$
Similarly,
\begin{align}
C&[(4,1)]=\{(p,q)\in\sigma|q>0, p>0, p>3q, 2p+3q>0\},\notag\\
C&[(2,-1)]=\{(p,q)\in\sigma|0>q, 2p+3q>0, p>0\}.\notag
\end{align}
The resulting fan is shown in figure \ref{f. Groebner fan}.
\end{exam}

\begin{figure}[ht]
\begin{center}
\includegraphics[scale=0.7]{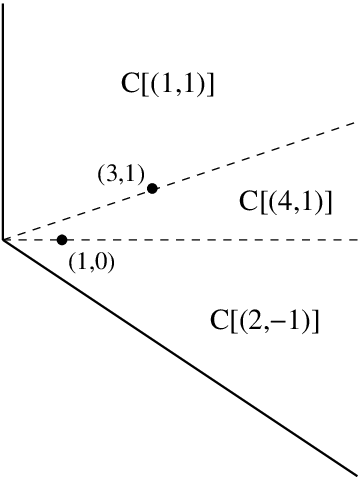}
\caption{Gr\"{o}bner fan of $J$.\label{f. Groebner fan}}
\end{center}
\end{figure}

%

\section{Higher Nash blowup of toric varieties}

In this section we introduce the notion of higher Nash blowup, defined by Takehiko
Yasuda in \cite{Y}. We state some basic properties and related results. Then we prove
that for normal toric varieties the higher Nash blowup is determined by the Gr\"obner
fan of a certain ideal.
\\
\\
The definition of the usual Nash blowup goes as follows (see \cite{No}):

\begin{defi}
Let $X\subset\C^m$ be an algebraic variety of pure dimension $d$. Consider the Gauss map:
\begin{align}
G:X\setminus\Si(X&)\rightarrow G(d,m)\notag\\
x&\mapsto T_xX,\notag
\end{align}
where $G(d,m)$ is the Grassmanian parameterizing the d-dimensional vector spaces in $\C^m$, and $T_xX$ is the
direction of the tangent space to $X$ at $x$. Denote by $X^*$ the Zariski closure of the graph of $G$. Call $\nu$
the restriction to $X^*$ of the projection of $X\times G(d,m)$ to $X$. The pair $(X^*,\nu)$ is called the \textit{Nash
blowup} of $X$.
\end{defi}

We can directly generalize this definition as follows (see \cite{OZ}, Section 1). Let us consider an
irreducible algebraic variety $X\subset\C^m$. Let $R$ be the ring of regular functions of $X$.
Consider the ideal $I=\ker(R\otimes R\rightarrow R)$, where $r\otimes r'\mapsto rr'$. We see $I$ as an
$R-$module via the map $R\rightarrow R\otimes R$, $r\mapsto r\otimes1$. For any $x\in X$, let
$(R_x,\m_x)$ be the localization of $R$ in $x$. Consider the following $\C\cong R_x/\m_x-$vector space:
$$T^n_xX:=(I_x/I_x^{n+1}\otimes\C)^{\vee}.$$
This is a vector space of dimension $N=\binom{d+n}{d}-1$ whenever $x$ is a non-singular point. Since $X\subset\C^m$
we have that $T^n_xX\subset T^n_x\C^m\cong\C^M$ where $M=\binom{m+n}{m}-1$, that is, we can see $T^n_xX$ as an
element of the grassmanian $G(N,M)$. Now consider the Gauss map:
$$G_n:X\setminus\Si(X)\rightarrow G(N,M)\mbox{ }\mbox{ }\mbox{ }x\mapsto T^n_xX.$$
Denote by $X_n$ the Zariski closure of the graph of $G_n$. Call $\nu$ the restriction to $X_n$ of
the projection of $X\times G(N,M)$ to $X$. The pair $(X_n,\nu)$ is called the \textit{Nash blowup of $X$ relative
to $I/I^{n+1}$} (this is a special case of a more general construction appearing in \cite{OZ}).
Viewed like this, it is clear that for $n=1$ this is exactly the usual Nash blowup of $X$ (in this case,
$T^1_xX=T_xX$, according to \cite{H}, Proposition 8.7).
\\
\\
This notion of Nash blowup of $X$ relative to $I/I^{n+1}$ is equivalent to the definition of higher Nash
blowup given by Yasuda (\cite{Y}, Proposition 1.8). The main difference between these constructions is that
Yasuda replaces the Grassmanian by a different parameter space of the variety: the Hilbert scheme of points.


\subsection{Higher Nash blowup}

Let $X:=\Sp R$, where $R=k[y_1,\ldots,y_s]/\mathfrak{p}$, $\mathfrak{p}$ is a prime ideal, and $k$ is an algebraically closed field
of characteristic zero. Consider $x\in X$ a $k-$point and let $\m$ be its corresponding maximal ideal in $R$.
Let $d=\dim X$. Let $x^{(n)}:=\Sp(R/\m^{n+1})$ be the $nth$ infinitesimal neighborhood of $x$.
If $X$ is smooth at $x$, then $x^{(n)}$ is a closed subscheme of $X$ of length
$N=\binom{d+n}{d}$ (i.e., $R/\m^{n+1}$ has length $N$ as an $R-$module). Therefore, it corresponds to a point
$$[x^{(n)}]\in Hilb_N(X),$$
where $Hilb_N(X)$ is the Hilbert scheme of $N$ points of $X$ (see \cite{Na}, Definition 1.2). If $X_{sm}$
denotes the smooth locus of $X$, then we have a map
$$\delta_n:X_{sm}\rightarrow Hilb_N(X),\mbox{ }\mbox{ }\mbox{ }x\mapsto [x^{(n)}].$$

\begin{defi}
(\cite{Y}, Definition 1.2) We define the \textit{nth Nash blowup} of $X$, denoted by $\Na$, to be the closure of the
graph of $\delta_n$ with reduced scheme structure in $X\times_k Hilb_N(X)$. By restricting the projection
$X\times_k Hilb_N(X)\rightarrow X$ we obtain a map $$\pi_n:\Na\rightarrow X.$$
This map is projective, birational, and it is an isomorphism over $X_{sm}$. In addition, $Nash_1(X)$ is
canonically isomorphic to the classical Nash blowup of $X$ (see \cite{Y}, Section 1).
\end{defi}

In \cite{Y}, the author conjectures that for $n$ big enough, the $nth$ Nash blowup of $X$ is non-singular
(see \cite{Y}, Conjecture 0.2). If the conjecture is true, this construction would give a one-step resolution
of singularities. In the same paper, the author proves that the conjecture is true for curves:

\begin{teo}
(\cite{Y}, Corollary 3.7) Let $X$ be a variety of dimension 1. Then for $n$ large enough, $\Na$ is non-singular.
\end{teo}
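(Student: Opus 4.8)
The plan is to reduce the statement to a local, complete, one-dimensional problem and then show that, once $n$ is larger than a bound depending on the conductor of $X$ in its normalization, the $n$th Nash blowup is already normal, hence --- being a curve --- non-singular; in fact it will then coincide with the normalization of $X$.

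First I would localize and complete. Since $\pi_n\colon\Na\to X$ is an isomorphism over $X_{sm}$ and $\Si(X)$ is a finite set (as $\dim X=1$), it suffices to prove that $\Na$ is non-singular along $\pi_n^{-1}(x)$ for every $x\in\Si(X)$, and this can be checked after replacing $\Ox_{X,x}$ by its completion $\widehat{\Ox}_{X,x}$, once one verifies that the formation of $\Na$ commutes with this completion. (The Hilbert scheme of points and the closure of the graph of $\delta_n$ both behave well under the flat base change $\Ox_{X,x}\to\widehat{\Ox}_{X,x}$; the delicate point is that the reduced structure is preserved, which uses excellence of $\Ox_{X,x}$.) Thus we may assume $X=\operatorname{Spec}R$ with $(R,\m)$ a reduced complete local $k$-algebra of dimension $1$, with normalization $\nu\colon\widetilde X\to X$ and $\widetilde X=\coprod_{j=1}^{r}\operatorname{Spec}k[[t_j]]$, one component per analytic branch of $(X,x)$.

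Next I would isolate the following reduction. The map $\pi_n$ is projective and birational, hence --- $X$ being a curve --- finite; therefore $\Na$ is a reduced curve with $R\subseteq\Gamma(\Na,\Ox)\subseteq\widetilde R$, and $\Na$ is non-singular if and only if it is normal, if and only if $\Na=\widetilde X$ (a finite birational modification that is normal is the normalization). Because normalization is a birational invariant we always have factorizations $\widetilde X\to\Na\to X$, so the theorem amounts to saying that for $n\gg0$ the first of these maps is an isomorphism. Since $\Na$ is already regular over $X_{sm}$ and is a curve, this reduces to showing: for $n\gg0$, the local ring of $\Na$ at each point lying over $x$ is a discrete valuation ring. (It then follows automatically that $\pi_n^{-1}(x)$ has exactly $r$ points, one per branch.)

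The heart of the argument is an explicit local computation at each branch. Fix a branch $j$; let $k[[t]]$ be the corresponding component of $\widetilde X$, with induced map $R\to k[[t]]$ of value semigroup $\Gamma\subset\N$, and let $c$ be a conductor exponent for the branch. For a smooth point $x'$ of $X$ on this branch one has $x'^{(n)}=\operatorname{Spec}(R/\mathfrak p_{x'}^{\,n+1})$, and the surjection $R\twoheadrightarrow R/\mathfrak p_{x'}^{\,n+1}$ factors through $k[[t]]$, so the point $[x'^{(n)}]\in\operatorname{Hilb}_N(X)$ is governed by the $(n+1)$st infinitesimal neighborhood of $x'$ along the smooth branch, realized inside $X$. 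One then has to compute the flat limit $Z_j$ of these points as $x'\to x$, together with the way $[x'^{(n)}]$ deforms near the limit, and to check that for $n$ larger than a bound depending only on $\Gamma$ (equivalently, on $c$) the local ring of $\Na$ at $(x,Z_j)$ is a discrete valuation ring. I expect this computation --- carried out in terms of $\Gamma$, with the conductor controlling the ``tail'' of each infinitesimal neighborhood so that high-order data effectively sees the smooth branch --- to be the main obstacle. Once it is in place, smoothness of $\Na$ along $\pi_n^{-1}(x)$ follows (and the distinctness of the $Z_j$, if one wants it separately, follows from the elementary fact that distinct analytic branches are separated by jets of sufficiently high order). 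Combining this with the reductions above gives $\Na=\widetilde X$ for $n\gg0$, which is non-singular.
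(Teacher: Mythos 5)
Note first that the paper you are looking at does not prove this statement at all: it is quoted verbatim from Yasuda (\cite{Y}, Corollary 3.7), so your proposal has to stand on its own. Your preliminary reductions are reasonable in outline: $\Si(X)$ is finite, $\pi_n$ is projective, birational and quasi-finite onto a curve, hence finite, so $\Na$ is a reduced curve with $R\subseteq\Gamma(\Na,\Ox_{\Na})\subseteq\widetilde R$, and non-singularity along $\pi_n^{-1}(x)$ is equivalent to the local rings there being discrete valuation rings, i.e.\ to $\Na$ agreeing with the normalization $\widetilde X$ near $x$. (The compatibility of the construction with the completion $\Ox_{X,x}\to\widehat{\Ox}_{X,x}$, which you invoke, itself needs an argument --- flatness of the base change for $\operatorname{Hilb}_N$ plus preservation of the reduced graph closure, using excellence --- but this is a secondary issue.)

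The genuine gap is that your argument stops exactly where the content of the theorem begins. Everything you actually establish --- that $\Na$ sits between $X$ and $\widetilde X$ --- is true for \emph{every} $n$, including $n=1$, and for $n=1$ the inclusion $\Na\subsetneq\widetilde X$ is typically strict (the classical Nash blowup of a plane curve singularity is in general not the normalization; that is why it must be iterated). So the dependence on ``$n$ big enough'' must enter through the step you explicitly defer: computing the flat limit $Z_j$ of $[x'^{(n)}]\in\operatorname{Hilb}_N(X)$ as $x'\to x$ along each branch, in terms of the value semigroup $\Gamma$ and the conductor $c$, and showing that once $n$ exceeds a bound depending on $\Gamma$ the local ring of the graph closure at $(x,Z_j)$ contains a uniformizer of that branch (equivalently, is a DVR). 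You write that you ``expect this computation \ldots to be the main obstacle,'' and indeed it is the entire proof: this branch-by-branch semigroup/conductor analysis is precisely what Yasuda carries out, and no part of your proposal substitutes for it. As it stands the proposal is a correct framing and reduction, not a proof.
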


For varieties of higher dimension the answer remains unknown, even though Yasuda has stated
that the $A_3$-singularity is probably a counterexample to his conjecture (see \cite{Y1}, Remark 1.5).


\subsection{Normalization of the higher Nash blowup of a normal toric variety}

Let $\sigma\subset\R^d$ be a strictly convex rational polyhedral cone of dimension $d$.
Let $k[A]:=k[\sd\cap\Z^d]=\Su$. After suitable change of coordinates, we can assume that
$\Su\subset\Po$. Let $X:=\Sp k[A]$ be the corresponding $d$-dimensional normal toric
variety with torus $\T\subset X$. Since the torus $\T$ is dense in $X$ we first remark that
$$\Na=\overline{\{(x,\delta_n(x))|x\in X_{sm}\}}=\overline{\{(x,\delta_n(x))|x\in\T\}}.$$
In addition, $\T\cong\pi_n^{-1}(\T)=\{(x,\delta_n(x))|x\in\T\}=\{(x,[x^{(n)}])|x\in\T\}$, i.e.,
$\Na$ contains an open set isomorphic to the torus $\T$. The action of $\T$ on $X$ induces the
following action of $\T$ on $\Na$ (cf. \cite{Y}, Section 2.2):
$$\T\times\Na\rightarrow\Na,\mbox{ }\mbox{ }\mbox{ }(t,(x,[Z]))\mapsto(t\cdot x,[t\cdot Z]).$$
Over points $x\in\T$, i.e., $(x,[x^{(n)}])\in\pi_n^{-1}(\T)$, this action looks like:
$$\T\times\pi_n^{-1}(\T)\rightarrow\pi_n^{-1}(\T),\mbox{ }\mbox{ }\mbox{ }(t,(x,[x^{(n)}]))
\mapsto(t\cdot x,[(t\cdot x)^{(n)}]).$$
Since this action extends the action of the torus $\T\cong\pi_n^{-1}(\T)$ over itself, we obtain:

\begin{pro}\label{p. Nashn is toric}
Let $X$ be an affine normal toric variety. Then, for all $n\in\N$, $\Na$ is a toric variety
with dense torus $\pi_n^{-1}(\T)\cong\T$.
\end{pro}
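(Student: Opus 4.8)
The plan is to verify that $\Na$ satisfies the defining axioms of a toric variety, namely that it is an irreducible variety containing an algebraic torus as a dense open subset, together with an action of that torus on $\Na$ extending the torus's action on itself by translation. First I would observe that $\Na$ is irreducible: by the preceding remarks it is the closure of the graph of $\delta_n$ restricted to $\T$, and since $\T$ is irreducible (it is the torus of the normal toric variety $X$), so is the graph $\{(x,\delta_n(x))\mid x\in\T\}$, hence so is its closure. Next I would record that $\pi_n^{-1}(\T)=\{(x,[x^{(n)}])\mid x\in\T\}$ is isomorphic to $\T$ via $\pi_n$ (this is already noted in the excerpt, since $\pi_n$ is an isomorphism over $X_{sm}\supseteq\T$), and that it is open in $\Na$ (being the preimage of the open set $\T\subset X$ under the continuous map $\pi_n$) and dense (being the set whose closure is $\Na$).

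Then I would turn to the torus action. The key point is that the $\T$-action on $X$ lifts to a $\T$-action on $\Na$ via the formula $(t,(x,[Z]))\mapsto (t\cdot x,[t\cdot Z])$ displayed in the excerpt; I would check that this is well-defined, i.e. that it preserves $\Na$. The action is clearly defined on $X\times_k \mathrm{Hilb}_N(X)$ (via the diagonal-type action $t\cdot(x,[Z])=(t\cdot x,[t\cdot Z])$, using the induced $\T$-action on $\mathrm{Hilb}_N(X)$ coming from the $\T$-action on $X$). On the dense open set $\pi_n^{-1}(\T)$ it sends $(x,[x^{(n)}])$ to $(t\cdot x,[(t\cdot x)^{(n)}])$, because translation by $t\in\T$ carries the $n$th infinitesimal neighborhood of $x$ to that of $t\cdot x$ (translation is an automorphism of $X$ near these smooth points, so it carries $\m_x^{n+1}$ to $\m_{t\cdot x}^{n+1}$). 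Thus the $\T$-action maps the dense subset $\pi_n^{-1}(\T)$ into $\Na$; since $\Na$ is closed in $X\times_k\mathrm{Hilb}_N(X)$ and the action map $\T\times (X\times_k\mathrm{Hilb}_N(X))\to X\times_k\mathrm{Hilb}_N(X)$ is a morphism, the image of the closure $\overline{\T\times \pi_n^{-1}(\T)}=\T\times\Na$ lands in the closure of the image, which is contained in $\Na$. Hence the action restricts to $\T\times\Na\to\Na$.

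Finally I would check compatibility: under the identification $\T\cong\pi_n^{-1}(\T)$, the restricted action $\T\times\pi_n^{-1}(\T)\to\pi_n^{-1}(\T)$, $(t,(x,[x^{(n)}]))\mapsto(t\cdot x,[(t\cdot x)^{(n)}])$ corresponds precisely to the multiplication action of $\T$ on itself, since the first coordinate transforms by $x\mapsto t\cdot x$ and the second coordinate is determined by the first. This verifies that $\pi_n^{-1}(\T)\cong\T$ is a dense torus in $\Na$ acting on $\Na$ in the required way, so $\Na$ is a toric variety with torus $\pi_n^{-1}(\T)\cong\T$.

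I expect the main obstacle to be the well-definedness of the lifted action, i.e. confirming that the $\T$-action on $X\times_k\mathrm{Hilb}_N(X)$ genuinely preserves $\Na$. The subtle point is that this requires knowing the action on the dense open $\pi_n^{-1}(\T)$ has the explicit form $[x^{(n)}]\mapsto[(t\cdot x)^{(n)}]$ — which rests on the fact that translation by a torus element is a scheme automorphism of $X$ sending $x^{(n)}$ to $(t\cdot x)^{(n)}$ — and then using closedness of $\Na$ together with continuity/morphism-ness of the action to propagate this from the dense subset to all of $\Na$. Everything else (irreducibility, density, the identification with $\T$) is essentially formal given the remarks already made in the excerpt.
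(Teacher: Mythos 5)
Your proposal is correct and follows essentially the same route as the paper: identify $\pi_n^{-1}(\T)\cong\T$ as a dense open subset of $\Na=\overline{\{(x,[x^{(n)}])\mid x\in\T\}}$ and observe that the $\T$-action $(t,(x,[Z]))\mapsto(t\cdot x,[t\cdot Z])$ on $X\times_k Hilb_N(X)$ restricts to $\Na$ and extends the torus's multiplication on itself. The only difference is that you spell out the well-definedness of the restricted action (via closedness of $\Na$ and density of $\pi_n^{-1}(\T)$), a point the paper asserts without comment.
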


This proposition is our starting point. Now we want to give a description of $\Na$ in terms of
fans and cones. In \cite{GT}, Section 7, the authors give a combinatorial description of non-normal
toric varieties having a finite open cover by $\T-$invariant affine sets. Unfortunately, we do not
know if such a cover exists for the higher Nash blowup of a toric variety.
Therefore we consider its normalization:
$$\eta:\Nn\rightarrow\Na.$$
Let $U:=\eta^{-1}(\T)$, which is dense since $\Nn$ is irreducible. Moreover, since $\T$ is contained
in the normal locus of $\Na$, we have that $U$ is isomorphic to $\T$. The action of $\T$ on $\Na$
induces the following action of $\T$ on $U$:
$$\T\times U\rightarrow U,\mbox{ }\mbox{ }\mbox{ }(t,\eta^{-1}(x,[x^{(n)}]))\mapsto\eta^{-1}(t\cdot x,
[(t\cdot x)^{(n)}]).$$
Since this action commutes with the normalization map restricted to $U$ then, by \cite{Se}, Lemma 6.1,
there is a unique action of $\T$ on $\Nn$ extending the action on $U$ and such that it commutes with
$\eta$. This implies that $\Nn$ is a (normal) toric variety with torus $U\cong\T$.
\\
\\
Now, since $\Nn$ is a normal toric variety, there exists a fan $\Sigma\subset N\otimes\R$, where $N$ is a lattice
of rank $d$, such that its associated normal toric variety is isomorphic to $\Nn$. The composition
$\pi_n\circ\eta:\Nn\rightarrow X$ is a morphism of toric varieties that sends the torus $U\subset\Nn$ to the
torus $\T\subset X$ in such a way that this restriction is a homomorphism of groups. Thus it is a toric
morphism. By \cite{O}, Theorem 1.13, there exists a morphism of lattices $\phi:N\rightarrow\Z^d$
compatible with $\Sigma$ and $\sigma$, and such that the induced morphism on the toric varieties is
$\pi_n\circ\eta$. On the other hand, since the normalization map is proper and birational we have that the
composition $\pi_n\circ\eta$ is a proper birational map of normal toric varieties. This implies that $\phi$ is
an isomorphism and $\sigma=\cup_{\tau\in\Sigma}\phi_{\R}(\tau)$, where $\phi_{\R}:N\otimes\R\rightarrow\Z^d\otimes\R$
is the tensor of $\phi$ and $\R$ (see \cite{O}, Chapter 1, Corollary 1.17). Because of this, we can assume that $N=\Z^d$,
$\phi$ is the identity, and $\Sigma$ is a refinement of $\sigma$.
\\
\\
Let $\mathbf{1}=(1,\ldots,1)$ be the distinguished point of the dense torus $\T\hookrightarrow X$ (see
\cite{CLS}, Chapter 3, Section 2, for the definition of distinguished point and its basic properties). Since
$\T\cong\pi_n^{-1}(\T)\cong\eta^{-1}(\pi_n^{-1}(\T))$, and since the action of $\T$ on $\Nn$
is induced by that of $\T$ on $X$, we have that $\eta^{-1}((\mathbf{1},\mathbf{1}^{(n)}))$ is the
distinguished point of the dense torus $\eta^{-1}(\pi_n^{-1}(\T))\subset\Nn$.
\\
\\
Consider $w\in\sigma$, and $f(x^{a_1},\ldots,x^{a_s})\in k[A]$. Since $k[A]\subset\Po$, we have
$f(x^{a_1},\ldots,x^{a_s})=F(x_1,\ldots,x_d)=\sum c_ux^u\in\Po$ (we are using different notation
for the same polynomial just to distinguish in which ring we are seeing the polynomial).
Let $d_w(f):=\max\{w\cdot u|c_u\neq0\}$.
Define
$$f_t:=t^{d_w(f)}F(t^{-w_1}x_1,\ldots,t^{-w_d}x_d)=
t^{d_w(f)}f(t^{-w\cdot a_1}x^{a_1},\ldots,t^{-w\cdot a_s}x^{a_s}).$$
Then we have $f_t=in_w(f)+t\cdot f'$, for some $f'\in k[A][t]$. Let $I_t:=\langle f_t|f\in I\rangle$ be
the ideal in $k[A][t]$ generated by the $f_t$.
\\
\\
Even though it may happen that $\langle (f_1)_t,\ldots,(f_r)_t\rangle\subsetneqq \langle f_1,\ldots,f_r\rangle_t$
in $k[A][t]$ (see \cite{E}, Exercise 15.25), we have the following result.

\begin{pro}\label{p. In-t = It-n}
Consider the ideal $\langle f_1,\ldots,f_r\rangle^{n+1}\subset k[A]$.
Let $w\in\sigma$. Then we have the following equality of ideals in $k[A][t,t^{-1}]$:
$$(\langle f_1,\ldots,f_r\rangle^{n+1})_t=\langle (f_1)_t,\ldots,(f_r)_t\rangle^{n+1}.$$
\end{pro}
\begin{proof}
The proof relies in the following facts that hold in $k[A][t,t^{-1}]$ and that can be checked by
a direct computation. For $f$, $g\in k[A]$, and $I$, $J$ ideals in $k[A]$:
\begin{itemize}
\item[(i)]$(f\cdot g)_t=f_t\cdot g_t.$
\item[(ii)]$(f+g)_t=\frac{f_t}{t^{d_w(f)-d_w(f+g)}}+\frac{g_t}{t^{d_w(g)-d_w(f+g)}}.$
\item[(iii)]$(I\cdot J)_t=I_t\cdot J_t.$
\end{itemize}
Now we proceed to prove the proposition, by induction. Let $n=0$. By definition,
$\langle (f_1)_t,\ldots,(f_r)_t\rangle\subset(\langle f_1,\ldots,f_r\rangle)_t$.
Now let $f=\sum_{i=1}^{r}h_if_i$. Then, by using (i) and (ii) we obtain
$f_t=\sum_{i=1}^{r}\frac{(h_if_i)_t}{t^{d_w(h_if_i)-d_w(f)}}
=\sum_{i=1}^{r}\frac{(h_i)_t}{t^{d_w(h_if_i)-d_w(f)}}\cdot(f_i)_t.$
This implies the other inclusion. Finally,
\begin{align}
(\langle f_1,\ldots,f_r\rangle^{n+1})_t&=
(\langle f_1,\ldots,f_r\rangle^n\cdot\langle f_1,\ldots,f_r\rangle)_t\notag\\
&=(\langle f_1,\ldots,f_r\rangle^n)_t\cdot(\langle f_1,\ldots,f_r\rangle)_t\notag\\
&=\langle (f_1)_t,\ldots,(f_r)_t\rangle^n\cdot
\langle (f_1)_t,\ldots,(f_r)_t\rangle\notag\\
&=\langle (f_1)_t,\ldots,(f_r)_t\rangle^{n+1}.\notag
\end{align}
\end{proof}
Let $J_0:=\langle x^{a_1}-1,\ldots,x^{a_s}-1\rangle\subset k[A]$ be the maximal ideal corresponding
to the closed point $\mathbf{1}\in X$ and let $J_n:=\langle x^{a_1}-1,\ldots,x^{a_s}-1\rangle^{n+1}$.
For $w\in\sigma$, consider the one-parameter subgroup $\lambda_w:k^*\to(k^*)^d$,
$t\mapsto t^w=(t^{w_1},\ldots,t^{w_d})$. Recall that $\lambda_w(t)$ acts on $\mathbf{1}$ in
the following way: $\lambda_w(t)\cdot\mathbf{1}=(t^{w\cdot a_1},\ldots,t^{w\cdot a_s})$.
Applying proposition \ref{p. In-t = It-n}, we have the following equality of ideals
in $k[A][t,t^{-1}]$: $(J_n)_t=\langle x^{a_1}-t^{w\cdot a_1},\ldots,x^{a_s}-t^{w\cdot a_s}\rangle^{n+1}$.
Thus, for any $t\in k^*$,
\begin{align}
\lambda_w(t)\cdot(\mathbf{1},\mathbf{1}^{(n)})&=(\lambda_w(t)\cdot\mathbf{1},(\lambda_w(t)\cdot\mathbf{1})^{(n)})
\notag\\
&=\Big((t^{w\cdot a_1},\ldots,t^{w\cdot a_s}),\Sp\frac{k[A]}{\langle x^{a_1}-t^{w\cdot a_1},\ldots,x^{a_s}-t^{w\cdot a_s}\rangle^{n+1}}\Big)\notag\\
&=\Big((t^{w\cdot a_1},\ldots,
t^{w\cdot a_s}),\Sp\frac{k[A]}{(J_n)_t}\Big).\notag
\end{align}
According to \cite{E}, Theorem 15.17 (this theorem is stated for the polynomial ring but
the same proof works if we replace polynomial ring by monomial subalgebra), we obtain:
\begin{equation}\label{eq1}
\lim_{t\rightarrow0}(\lambda_w(t)\cdot(\mathbf{1},\mathbf{1}^{(n)}))=
\Big(\lim_{t\rightarrow0}(\lambda_w(t)\cdot\mathbf{1}),\Sp\frac{k[A]}{in_w(J_n)}\Big).
\end{equation}

\begin{rem}
The notation we use for the limits of one-parameter subgroups is not standard. Usually the limit is denoted
just as $\lim_{t\rightarrow0}\lambda_w(t)$. Since we will be taking these limits at different levels ($X$,
$\Na$, and $\Nn$) we need to modify the standard notation in order to distinguish in which toric variety
we are working on.
\end{rem}

The following proposition shows that the fan defining $\Nn$ is a refinement of the
Gr\"{o}bner fan of $J_n$. In fact, we will see later that these two fans are actually equal.

\begin{pro}\label{p. Sigma divides GF}
Let $X=\Sp k[A]$ be the normal toric variety associated to the cone $\sigma$. Let $\Sigma$ be the fan
associated to the normalization of $Nash_n(X)$ and let $GF(J_n)$ be the Gr\"{o}bner fan of $J_n$. Then
$\Sigma$ is a refinement of $GF(J_n)$. In particular, there exists a surjective morphism of normal toric varieties
$$\xymatrix{\Nn\ar[r]^{\phi}&X_{GF(J_n)}}.$$
\end{pro}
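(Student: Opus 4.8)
My plan is to show that every cone of $\Sigma$ is contained in some cone of $\GF$; since both fans have support $\sigma$ (for $\Sigma$ this was proved above, for $\GF$ it is immediate from the construction of the Gr\"obner fan), this is exactly the assertion that $\Sigma$ refines $\GF$, and the surjective morphism will follow. The bridge between the two fans is the behaviour of one‑parameter subgroups acting on distinguished points, the essential input being the degeneration formula established above,
$$\lim_{t\to0}\bigl(\lambda_w(t)\cdot(\mathbf{1},\mathbf{1}^{(n)})\bigr)=\Bigl(\lim_{t\to0}\bigl(\lambda_w(t)\cdot\mathbf{1}\bigr),\ \Sp k[A]/in_w(J_n)\Bigr),\qquad w\in\sigma.$$
It is enough to argue with $w\in\Z^d$, since the cones of $\Sigma$ and of $\GF$ are rational and $in_w(J_n)$ is unchanged when $w$ is scaled by a positive number.

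First I would invoke the standard dictionary for normal toric varieties (see \cite{CLS}, Chapter 3, Section 2): for the toric variety of a fan $\Delta$ in $\R^d$ with distinguished point $y_0$ of its dense torus, and for $v,v'\in\Z^d\cap|\Delta|$, the limits $\lim_{t\to0}(\lambda_v(t)\cdot y_0)$ and $\lim_{t\to0}(\lambda_{v'}(t)\cdot y_0)$ exist in that variety, and they are equal if and only if $v$ and $v'$ lie in the relative interior of one and the same cone of $\Delta$ (their common value being the distinguished point of that cone). Applying this with $\Delta=\Sigma$, the fan of $\Nn$, and $y_0=\eta^{-1}((\mathbf{1},\mathbf{1}^{(n)}))$ — which the discussion above identifies as the distinguished point of the dense torus $U\cong\T$ — one gets: if $w,w'\in\Z^d\cap\sigma$ lie in the relative interior of a common cone $\tau\in\Sigma$, then $\lim_{t\to0}(\lambda_w(t)\cdot y_0)=\lim_{t\to0}(\lambda_{w'}(t)\cdot y_0)$ in $\Nn$.

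Next I would transport this equality to $\Na$ along the normalization $\eta\colon\Nn\to\Na$. Since $\eta$ is a morphism of varieties, is $\T$‑equivariant (by construction), and satisfies $\eta(y_0)=(\mathbf{1},\mathbf{1}^{(n)})$, it carries the extended orbit $\mathbb{A}^1\to\Nn$, $t\mapsto\lambda_w(t)\cdot y_0$, to the extended orbit $t\mapsto\lambda_w(t)\cdot(\mathbf{1},\mathbf{1}^{(n)})$ in $\Na$; by separatedness these extensions are unique, so $\lim_{t\to0}(\lambda_w(t)\cdot(\mathbf{1},\mathbf{1}^{(n)}))=\eta\bigl(\lim_{t\to0}(\lambda_w(t)\cdot y_0)\bigr)$, and likewise for $w'$, whence the two limits in $\Na$ coincide. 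Comparing their $Hilb_N(X)$‑coordinates via the degeneration formula yields $[\Sp k[A]/in_w(J_n)]=[\Sp k[A]/in_{w'}(J_n)]$ in $Hilb_N(X)$. Since a $k$‑point of the Hilbert scheme is the same datum as the corresponding closed subscheme of $X$, and a closed subscheme of the affine variety $X=\Sp k[A]$ determines its ideal in $k[A]$, this forces $in_w(J_n)=in_{w'}(J_n)$, i.e. $w'\in C[w]$ in the notation of Proposition \ref{p. C[w] cone-1}.

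To conclude, fix $\tau\in\Sigma$ and a lattice point $w_0$ in its relative interior. The previous step gives $\operatorname{relint}(\tau)\cap\Z^d\subseteq C[w_0]$; clearing denominators and using $in_{mw}(J_n)=in_w(J_n)$ for $m\in\Z_{>0}$ promotes this to $\operatorname{relint}(\tau)\cap\Q^d\subseteq C[w_0]$, and as $\tau$ is rational its rational relative‑interior points are dense in $\operatorname{relint}(\tau)$, so $\operatorname{relint}(\tau)\subseteq\overline{C[w_0]}$ and therefore $\tau=\overline{\operatorname{relint}(\tau)}\subseteq\overline{C[w_0]}$, a cone of $\GF$. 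Hence $\Sigma$ refines $\GF$, and the identity of $\Z^d$ is then compatible with $\Sigma$ and $\GF$, so by \cite{O}, Theorem 1.13 it induces a toric morphism $\phi\colon\Nn=X_\Sigma\to\XGF$; since the two fans have the common support $\sigma$ this $\phi$ is proper, and since it restricts to the identity on the dense tori it is dominant, hence surjective. The delicate point throughout is the bookkeeping in the third paragraph: one must compute the one‑parameter limit of the distinguished point \emph{inside} $\Nn$ so that the fan dictionary for $\Sigma$ applies, while \emph{simultaneously}, through the equivariant normalization and the degeneration formula, reading off that its $Hilb_N(X)$‑component is exactly $\Sp k[A]/in_w(J_n)$ — and one must be sure that equality of these Hilbert‑scheme points is genuine equality of ideals, not merely of radicals or of supports.
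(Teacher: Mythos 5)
Your proposal is correct and follows essentially the same route as the paper: compute the limit of $\lambda_w(t)\cdot\eta^{-1}((\mathbf{1},\mathbf{1}^{(n)}))$ inside $\Nn$ to get the distinguished point of the cone of $\Sigma$, push it down to $\Na$ along the equivariant normalization, identify the $Hilb_N(X)$-component as $\Sp k[A]/in_w(J_n)$, and conclude that all $w$ in the relative interior of a cone of $\Sigma$ give the same initial ideal, so $\Sigma$ refines $GF(J_n)$. Your extra care (separatedness in place of the paper's bare continuity claim, and the density argument passing from lattice points of the relative interior to the whole cone) only tightens steps the paper treats more loosely.
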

\begin{proof}
To begin with, recall that the support of both $\Sigma$ and $GF(J_n)$ is $\sigma$. Let $w$ be in the relative
interior of $\sigma_1$, where $\sigma_1$ is a cone of $\Sigma$ different from $\{0\}$. Then there exists a
unique cone $\sigma_2$ of $GF(J_n)$ such that $w$ belongs to the relative interior of $\sigma_2$. Denote by
$\gamma_{\sigma_1}$ the distinguished point of $\sigma_1$ in $\Nn$. Now let $w'\neq w$ be in the relative
interior of $\sigma_1$. By \cite{CLS}, Proposition 3.2.2, we have
\begin{equation}\label{eq2}
\lim_{t\rightarrow0}(\lambda_w(t)\cdot\eta^{-1}((\mathbf{1},\mathbf{1}^{(n)})))=\gamma_{\sigma_1}=
\lim_{t\rightarrow0}(\lambda_{w'}(t)\cdot\eta^{-1}((\mathbf{1},\mathbf{1}^{(n)}))).
\end{equation}
By definition, $\lambda_w(t)\cdot\eta^{-1}((\mathbf{1},\mathbf{1}^{(n)}))=
\eta^{-1}((\lambda_w(t)\cdot\mathbf{1},(\lambda_w(t)\cdot\mathbf{1})^{(n)}))$. But now, since $\eta$ is a
continuous map,
\begin{align}
\eta(\lim_{t\rightarrow0}(\lambda_w(t)\cdot\eta^{-1}((\mathbf{1},\mathbf{1}^{(n)}))))&=
\lim_{t\rightarrow0}(\eta(\lambda_w(t)\cdot\eta^{-1}((\mathbf{1},\mathbf{1}^{(n)}))))\notag\\
&=\lim_{t\rightarrow0}(\eta(\eta^{-1}((\lambda_w(t)\cdot\mathbf{1},(\lambda_w(t)\cdot\mathbf{1})^{(n)}))))\notag\\
&=\lim_{t\rightarrow0}(\lambda_w(t)\cdot\mathbf{1},(\lambda_w(t)\cdot\mathbf{1})^{(n)})\notag\\
&=\lim_{t\rightarrow0}(\lambda_w(t)\cdot(\mathbf{1},\mathbf{1}^{(n)})).\notag
\end{align}
Similarly, $\eta(\lim_{t\rightarrow0}(\lambda_{w'}(t)\cdot\eta^{-1}((\mathbf{1},\mathbf{1}^{(n)}))))=
\lim_{t\rightarrow0}(\lambda_{w'}(t)\cdot(\mathbf{1},\mathbf{1}^{(n)}))$. Thus, by (\ref{eq1}) and (\ref{eq2}),
$\Sp k[A]/in_w(J_n)=\Sp k[A]/in_{w'}(J_n)$. This is an equality of closed subschemes of $\Sp k[A]$ according
to \cite{E}, Theorem 15.17. This implies that $in_{w}(J_n)=in_{w'}(J_n)$, i.e., $w'$ belongs to the relative
interior of $\sigma_2$. Therefore $\sigma_1\subset\sigma_2$. Since $\Sigma$ and $GF(J_n)$ have the same
support, we conclude that $\Sigma$ is a refinement of $GF(J_n)$.
\end{proof}

\begin{rem}
Notice that in the previous proof we cannot give a similar argument to show that $GF(J_n)$ is a refinement of $\Sigma$
since the normalization map may fail to be 1-1 over the non-normal locus. More precisely, let $\{r_i\}$,
$\{s_i\}$ be two sequences in $\pi_n^{-1}(\T)\subset\Na$ such that $\lim r_i=l=\lim s_i$, where
$l\in\Na\setminus\pi_n^{-1}(\T)$. Then it may happen that $\lim\eta^{-1}(r_i)\neq\lim\eta^{-1}(s_i)$.
\end{rem}

To prove that $\Sigma=GF(J_n)$ we will use the following lemma whose proof is similar to the one
of the previous proposition.

\begin{lem}\label{l. new lemma}
Suppose that $\sigma_1$, $\sigma_2$ are two cones in $\Sigma$ such that their relative interiors
are contained in the relative interior of some cone $\tau\in GF(J_n)$. Suppose that
the relative interior of $\sigma_1\cap\sigma_2$ is also contained in the relative interior of $\tau$.
Then
$$\eta(\gamma_{\sigma_1})=\eta(\gamma_{\sigma_2})=\eta(\gamma_{\sigma_1\cap\sigma_2}).$$
\end{lem}
\begin{proof}
Let $w$, $w'$, $w"$ be in the relative interior of $\sigma_1$, $\sigma_2$, and $\sigma_1\cap\sigma_2$,
respectively.
By \cite{CLS}, Proposition 3.2.2, we have:
\begin{align}
\lim_{t\rightarrow0}\lambda_{w}(t)\cdot\eta^{-1}((\mathbf{1},\mathbf{1}^{(n)}))&=\gamma_{\sigma_1},\notag\\
\lim_{t\rightarrow0}\lambda_{w'}(t)\cdot\eta^{-1}((\mathbf{1},\mathbf{1}^{(n)}))&=\gamma_{\sigma_2},\notag\\
\lim_{t\rightarrow0}\lambda_{w"}(t)\cdot\eta^{-1}((\mathbf{1},\mathbf{1}^{(n)}))&=\gamma_{\sigma_1\cap\sigma_2}.\notag
\end{align}
As in the previous proposition, using the fact that $\eta$ is a continuous map and (\ref{eq1}), we obtain:
\begin{align}
\eta(\gamma_{\sigma_1})&=\Big(\lim_{t\rightarrow0}(\lambda_{w}(t)\cdot\mathbf{1}),\Sp\frac{k[A]}{in_{w}(J_n)}\Big),
\notag\\
\eta(\gamma_{\sigma_2})&=\Big(\lim_{t\rightarrow0}(\lambda_{w'}(t)\cdot\mathbf{1}),\Sp\frac{k[A]}{in_{w'}(J_n)}\Big),
\notag\\
\eta(\gamma_{\sigma_1\cap\sigma_2})
&=\Big(\lim_{t\rightarrow0}(\lambda_{w"}(t)\cdot\mathbf{1}),\Sp\frac{k[A]}{in_{w"}(J_n)}\Big).\notag
\end{align}
On the other hand, $w$, $w'$, and $w"$ are also contained in the relative interior of $\tau$.
This implies, by definition of Gr\"{o}bner fan, that $in_{w}(J_n)=in_{w'}(J_n)=in_{w"}(J_n)$.
Consequently, $\eta(\gamma_{\sigma_1})=\eta(\gamma_{\sigma_2})=\eta(\gamma_{\sigma_1\cap\sigma_2})$.
\end{proof}

\begin{teo}\label{t. Nash = GF}
Let $X=\Sp k[A]$ be the normal toric variety associated to the cone $\sigma$. Let $\Sigma$ be the fan
associated to the normalization of $Nash_n(X)$ and let $GF(J_n)$ be the Gr\"{o}bner fan of $J_n$. Then
$\Sigma=GF(J_n)$.
\end{teo}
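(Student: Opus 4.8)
The plan is to construct a morphism $\psi\colon\XGF\to\Na$ fitting into a commutative triangle with the normalization $\eta\colon\Nn\to\Na$ and the surjection $\phi\colon\Nn\to\XGF$ of the preceding proposition, i.e.\ satisfying $\eta=\psi\circ\phi$. Granting such a $\psi$, the theorem follows formally: the normalization $\eta$ is finite and $\phi$ is surjective, so $\phi$ (and $\psi$) are finite morphisms; since $\phi$ is moreover birational and both $\Nn$ and $\XGF$ are normal, $\phi$ must be an isomorphism, and therefore $\Sigma=\GF$.

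To build $\psi$ I would proceed as follows. There is already a torus isomorphism $\T\subset\XGF\to\T\subset\Na$, since both $\eta$ and $\phi$ restrict to isomorphisms on tori. As $\XGF$ is a normal toric variety and $\phi,\eta$ are equivariant, it suffices to define $\psi$ on the distinguished point $\gamma_{\tau}$ of each cone $\tau\in\GF$ in a torus-compatible way, and then to invoke a version of Zariski's Main Theorem (using normality of $\XGF$) to conclude that the resulting equivariant map of sets is a morphism. For a cone $\tau\in\GF$ not subdivided in $\Sigma$, the corresponding component of $\phi$ is an isomorphism $X_{\tau}\to X_{\tau}$ and I set $\psi(\gamma_{\tau}):=\eta(\phi^{-1}(\gamma_{\tau}))$. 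For a cone $\tau\in\GF$ that is subdivided in $\Sigma$, write $\tau=\sigma_1\cup\cdots\cup\sigma_r$ with $\sigma_i\in\Sigma$ and each relative interior of $\sigma_i$ contained in that of $\tau$, and set $\psi(\gamma_{\tau}):=\eta(\gamma_{\sigma_1})$; facts (a) and (b) above (namely $\phi(\gamma_{\sigma_i})=\gamma_{\tau}$ and $\phi(O(\sigma_i))\subseteq O(\tau)$) then let me extend this over the orbit by $\psi(t\cdot\gamma_{\tau}):=t\cdot\eta(\gamma_{\sigma_1})$.

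The step requiring real work — and the main obstacle — is showing that $\psi$ is well defined, i.e.\ that $\eta(\gamma_{\sigma_i})$ is independent of $i$ when $\tau$ is subdivided. Here I would use the description of distinguished points as limits of one-parameter subgroups: choosing $w_i$ in the relative interior of $\sigma_i$, one has $\gamma_{\sigma_i}=\lim_{t\to0}\lambda_{w_i}(t)\cdot\eta^{-1}((\mathbf{1},\mathbf{1}^{(n)}))$ by \cite{CLS}, Proposition 3.2.2, and applying the continuous map $\eta$ together with formula (2) of the excerpt yields
\[
\eta(\gamma_{\sigma_i})=\Big(\lim_{t\to0}(\lambda_{w_i}(t)\cdot\mathbf{1}),\ \Sp\frac{k[A]}{in_{w_i}(J_n)}\Big).
\]
Since all the $w_i$ lie in the relative interior of the \emph{single} Gr\"obner cone $\tau$, the definition of $\GF$ gives $in_{w_i}(J_n)=in_{w_j}(J_n)$ for all $i,j$, so the right-hand side is independent of $i$ and hence $\eta(\gamma_{\sigma_i})=\eta(\gamma_{\sigma_1})$. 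Together with (a) and (b) this makes $\psi$ a well-defined equivariant map of sets with $\eta=\psi\circ\phi$; Zariski's Main Theorem upgrades it to a morphism, and the formal argument of the first paragraph then finishes the proof. Commutativity of the triangle over the torus is automatic, since there $\eta$ and $\phi$ are the same isomorphism.
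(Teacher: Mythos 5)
Your proposal is correct and follows essentially the same route as the paper: it constructs $\psi$ on distinguished points via the limit-of-one-parameter-subgroups description, uses continuity of $\eta$ and the identification of the limit with $\Sp k[A]/in_{w_i}(J_n)$ to check well-definedness on subdivided cones, and then concludes via Zariski's Main Theorem, finiteness of the normalization, and normality that $\phi$ is an isomorphism. This matches the paper's argument step for step.
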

\begin{proof}
Suppose that $\Sigma\neq GF(J_n)$. According to proposition \ref{p. Sigma divides GF}, there exists a
$d-$dimensional cone $\tau\in\GF$ which is
subdivided in $\Sigma$. Let $\sigma_1,\sigma_2\in\Sigma$ be two different $d-$dimensional cones such that
their relative interiors, as well as the relative interior of $\sigma_1\cap\sigma_2$, are contained in the
relative interior of $\tau$. Notice that, in particular, $1\leq\mbox{dimension of }\sigma_1\cap\sigma_2<d$.
For $t\in\T$, we get from lemma \ref{l. new lemma} that
$$\eta(t\cdot\gamma_{\sigma_1\cap\sigma_2})=\eta(t)\cdot\eta(\gamma_{\sigma_1\cap\sigma_2})=
\eta(t)\cdot\eta(\gamma_{\sigma_1})=\eta(t\cdot\gamma_{\sigma_1})=\eta(\gamma_{\sigma_1}),$$
since $\eta$ is equivariant and $\gamma_{\sigma_1}$ is fixed for the torus action (because $\sigma_1$ is
of maximal dimension). This shows that $\eta(O_{\sigma_1\cap\sigma_2})=\eta(\gamma_{\sigma_1})$. Since
the dimension of the orbit $O_{\sigma_1\cap\sigma_2}$ is greater than zero, this contradicts the
fact that $\eta$ is a finite morphism. Therefore $\Sigma=GF(J_n)$.
\end{proof}

Let us illustrate the previous theorem by an example. Let
$\sigma$ be the cone generated by $(0,1)$ and $(4,-3)$. Then
$\sd\cap\Z^2$ is generated by $\{(1,0),(1,1),(3,4)\}$. Let
$X=\Sp\C[\sd\cap\Z^2]=\Sp\C[x,y,z]/\langle xy-z^4\rangle$. It is well known
that $Nash_1(X)$ is the blowup of the ideal $\langle x,y,z^3\rangle$ (see \cite{No}, Theorem 1). According to \cite{GM}, Section 4.3,
the fan corresponding to $\overline{Nash_1(X)}$ is the fan in figure \ref{f. fan Nash-1 A-3 example}.
\\
\\
Let us compare this fan with the Gr\"{o}bner fan of the ideal $J_1=\langle u-1,u^3v^4-1,uv-1\rangle^2\subset\C[u,u^3v^4,uv]$.
Consider the following vectors: $w_1=(1,0)$, $w_2=(3,-2)$. Implementing the so-called \textit{extrinsic algorithm
for computing intrinsic Gr\"obner bases} (see \cite{St}, Algorithm 11.24) in $\mathtt{SINGULAR}$ $\mathtt{3}$-$\mathtt{1}$-$\mathtt{6}$,
we find that the reduced Gr\"{o}bner bases of $J_1$ with respect to $w_1$ and $w_2$ are, respectively,
$$\{u^2v^2-2uv+1, u^2v-u-uv+1, u^2-2u+1, u^3v^4+u-4uv+2\},$$
$$\{u^2v^2-2uv+1, u^4v^5-u^3v^4-uv+1, u^6v^8-2u^3v^4+1, u+u^3v^4-4uv+2\}.$$
As in the proof of proposition \ref{p. C[w] cone-1}, we obtain the following open cones:
$$C[w_1]=\{(a,b)\in\sigma|a+b>0, a>0, a+2b>0, 2a+3b>0,3a+4b>0\},$$
$$C[w_2]=\{(a,b)\in\sigma|a+b>0, 3a+4b>0, -a-2b>0, -b>0, a>0\}.$$
The closures of these cones give precisely the fan in figure \ref{f. fan Nash-1 A-3 example}.

\begin{figure}[ht]
\begin{center}
\includegraphics{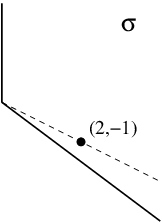}
\caption{Fan for $\overline{Nash_1(X)}$.\label{f. fan Nash-1 A-3 example}}
\end{center}
\end{figure}

%

\section{An analogue of Nobile's theorem}\label{s. Nobile}

In this final section we study an analogue of the following well-known theorem of A. Nobile
(\cite{No}, Theorem 2):

\begin{teo}
Let $X$ be an algebraic variety of pure dimension $d$ over an algebraically closed field of
characteristic zero. Let $(X^*,\nu)$ be the Nash blowup of $X$. Then, $\nu$ is an isomorphism
if and only if $X$ is non-singular.
\end{teo}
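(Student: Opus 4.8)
\medskip
\noindent We will not reprove this classical fact (it is \cite{No}, Theorem~2); for completeness we describe the approach one would follow. One direction is immediate: if $X$ is non-singular then $\Si(X)=\emptyset$, the Gauss map $G$ is defined on all of $X$, and $\nu$ identifies $X^{*}$ with the graph of $G$, i.e.\ with $X$ itself. For the converse one assumes $\nu$ is an isomorphism and must deduce that $X$ is non-singular. The plan is first to reformulate the hypothesis sheaf-theoretically, then to reduce to a local statement about the module of derivations, and finally to prove that statement by an induction that uses characteristic zero in an essential way.

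\medskip
\noindent For the reformulation: fix a closed embedding $X\subset\C^{m}$ with coordinate ring $R$, giving a canonical surjection $q_{0}\colon\Ox_{X}^{\,m}\cong\Omega^{1}_{\C^{m}}|_{X}\twoheadrightarrow\Omega^{1}_{X}$, which over $X_{sm}$ is a locally free quotient of rank $d$. Dualizing fibrewise, $X^{*}$ is (up to the canonical isomorphism between $G(d,m)$ and the Grassmannian of rank-$d$ quotients of $(\C^{m})^{*}$) the closure of the graph of $X_{sm}\to G(d,m)$, $x\mapsto\bigl[(\C^{m})^{*}\twoheadrightarrow\Omega^{1}_{X}\otimes k(x)\bigr]$; pulling back the tautological quotient produces on $X^{*}$ a locally free rank-$d$ sheaf $\mathcal{N}$ together with a surjection $\nu^{*}\Omega^{1}_{X}\twoheadrightarrow\mathcal{N}$ equal to the identity over $X_{sm}$. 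If $\nu$ is an isomorphism, identifying $X^{*}$ with $X$ gives a surjection $\Omega^{1}_{X}\twoheadrightarrow\mathcal{N}$ onto a locally free rank-$d$ sheaf which is the identity over $X_{sm}$; since $\mathcal{N}$ is torsion free it factors through $\Omega^{1}_{X}/\mathrm{tors}$, and the induced surjection $\Omega^{1}_{X}/\mathrm{tors}\twoheadrightarrow\mathcal{N}$ is a generically bijective map of torsion-free sheaves of the same rank, hence an isomorphism. Conversely, if $\Omega^{1}_{X}/\mathrm{tors}$ is locally free of rank $d$, then $q_{0}$ followed by $\Omega^{1}_{X}\twoheadrightarrow\Omega^{1}_{X}/\mathrm{tors}$ produces a section of the proper birational morphism $\nu$, which (as $X^{*}$ is reduced, irreducible, of dimension $d$) forces $\nu$ to be an isomorphism. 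So the theorem reduces to proving that $\Omega^{1}_{X}/\mathrm{tors}(\Omega^{1}_{X})$ is locally free of rank $d$ if and only if $X$ is non-singular; only the ``only if'' is nontrivial. Dualizing, such local freeness makes the tangent sheaf $\mathcal{T}_{X}:=(\Omega^{1}_{X})^{\vee}=(\Omega^{1}_{X}/\mathrm{tors})^{\vee}$ locally free of rank $d$ as well, so it is enough to prove the local statement below.

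\medskip
\noindent The local statement is: if $R=\Ox_{X,x}$ is a reduced local $k$-algebra of dimension $d$, essentially of finite type over the algebraically closed field $k$ of characteristic zero, and $\mathrm{Der}_{k}(R)$ is a free $R$-module of rank $d$, then $R$ is regular (whence $\dim_{k}\m_{x}/\m_{x}^{2}=d$ at every point, so $X$ is non-singular). Characteristic zero is genuinely needed: over $\overline{\mathbb{F}}_{3}$ the cuspidal cubic $y^{2}=x^{3}$ is singular at the origin while all of its tangent lines are parallel, so its Gauss map is constant, $\nu$ is an isomorphism, and correspondingly $\mathrm{Der}_{k}(R)$ is free of rank $1$ at the singular point. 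In characteristic zero the plan is to induct on $d$: the case $d=0$ is trivial, and for $d\ge 1$ one would first use that $\mathrm{Der}_{k}(R)$ has maximal rank, together with generic smoothness of $X$, to produce $D\in\mathrm{Der}_{k}(R)$ and $t\in\m_{x}$ with $Dt$ a unit (so, replacing $D$ by $D/(Dt)$, $Dt=1$), and then pass to $\widehat{R}$ and use the formal flow $\exp(sD)$ of $D$ — this is exactly the place where one divides by $n!$ — to get $\widehat{R}\cong\widehat{R_{0}}[[t]]$ with $R_{0}=\ker D$ reduced of dimension $d-1$ and $\mathrm{Der}_{k}(R_{0})$ free of rank $d-1$; the inductive hypothesis then makes $R_{0}$, hence $R$, regular.

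\medskip
\noindent The hard part is precisely this last step: turning ``$\mathrm{Der}_{k}(R)$ free of maximal rank'' into ``$R$ regular'' in characteristic zero, and in particular making the formal-flow argument rigorous — existence of the non-vanishing derivation, convergence of $\exp(sD)$ in the completion, and exactness of the associated sequence of modules of differentials. Everything preceding it is soft. We stress where the difficulty lies because for normal toric varieties this characteristic-zero step is bypassed altogether: by Theorem~\ref{t. Nash = GF} the analogous question for the higher Nash blowup becomes the purely combinatorial one, treated in Section~\ref{s. Nobile}, of whether the Gr\"obner fan $GF(J_{n})$ properly subdivides the cone $\sigma$.
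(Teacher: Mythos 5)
The paper contains no proof of this statement: it is quoted from Nobile (\cite{No}, Theorem 2) purely as motivation for the toric analogue proved in Section \ref{s. Nobile}, so deferring to that reference, as you do, matches the paper's treatment. Your ``soft'' part is also fine: the equivalence between ``$\nu$ is an isomorphism'' and ``$\Omega^1_X/\mathrm{tors}$ is locally free of rank $d$'' is the standard characterization of when the Nash blowup is trivial, and your characteristic-$3$ cusp shows correctly why characteristic zero is essential.

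The sketch of the hard implication, however, has a genuine gap. When you dualize and declare it ``enough'' to prove that freeness of $\mathrm{Der}_k(R)$ of rank $d$ forces $R$ to be regular, you discard the actual hypothesis ($\Omega^1_X/\mathrm{tors}$ locally free) in favor of the strictly weaker one that its dual is free; that weaker statement is the Zariski--Lipman conjecture, which in characteristic zero is still open in general (Lipman proved only that it implies normality; it is known in special cases such as hypersurfaces and the graded case). So the proposed route reduces a 1975 theorem to an open problem, and the failure is located exactly at the first step of your induction: freeness of $\mathrm{Der}_k(R)$ together with generic smoothness does \emph{not} produce a derivation $D$ and an element $t\in\m_x$ with $Dt$ a unit --- a priori every derivation could preserve $\m_x$ at the singular point, and producing such a $D$ is precisely the open content of the conjecture. (If such a $D$ exists, the formal-flow argument $\widehat R\cong\widehat{R_0}[[t]]$ does work; that part is classical.) A correct argument --- in particular Nobile's --- must exploit the full hypothesis that $\Omega^1_X/\mathrm{tors}$ itself, not merely its double dual, is locally free. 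Since the paper only cites the result, none of this affects the paper; but as a proof outline for the theorem it would not close.
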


We will prove the analogue of this theorem in our particular context, that is, we consider only
normal toric varieties and Nash blowup is replaced by normalized higher Nash blowup. In view of the
results of the previous sections, we will be able to give a combinatorial proof using the theory of
Gr\"obner bases. Once this is done, it is an immediate consequence that the analogue of Nobile's theorem
for higher Nash blowup without normalization is also true for normal toric varieties (see Corollary
\ref{c. Nobile without normalization}).
\\
\\
Let $X$ be a normal toric variety. Let $(\Nn,\pi_n\circ\eta)$ be the $nth$ normalized higher Nash blowup of $X$.
One direction of the analogue of Nobile's theorem is clear; namely, if $X$ is non-singular then $\pi_n$ is an
isomorphism ($\pi_n$ only modifies singular points) and so is $\eta$. Therefore, if $X$ is non-singular,
$\pi_n\circ\eta$ is an isomorphism.
\\
\\
Let us suppose now that $X$ is singular. We want to prove that $\pi_n\circ\eta$ is not an isomorphism.
Let $\sigma\subset\R^d$ be a strictly convex polyhedral cone such that $X$ is the associated normal
toric variety. By theorem \ref{t. Nash = GF}, the fan corresponding to $\Nn$ is given by the Gr\"obner
fan of the ideal $J_n=\langle x^{a_1}-1,\ldots,x^{a_s}-1\rangle^{n+1}\subset\Su=k[\sd\cap\Z^d]$.
To prove that $\pi_n\circ\eta$ is not an isomorphism it suffices to prove that the Gr\"obner fan of $J_n$
truly subdivides $\sigma$. Indeed, suppose that $\GF$ is a non-trivial
subdivision of $\sigma$ and consider two cones $\sigma_1\neq\sigma_2$ in $\GF$, whose relative interiors
are contained in the relative interior of $\sigma$. Denote by $\gamma_{\sigma_1}$, $\gamma_{\sigma_2}$, and
$\gamma_{\sigma}$ the respective distinguished points. Then by \cite{CLS}, Lemma 3.3.21, we have:
$$(\pi_n\circ\eta)(\gamma_{\sigma_1})=\gamma_{\sigma}=(\pi_n\circ\eta)(\gamma_{\sigma_2}).$$
Since $\gamma_{\sigma_1}\neq\gamma_{\sigma_2}$, we see that $\pi_n\circ\eta$ is not injective, so it is not
an isomorphism.
\\
\\
Therefore, by definition of Gr\"obner fan, we need to find $w$, $w'\in\sigma$ such that $in_w(J_n)\neq in_{w'}(J_n)$.
As we saw in previous sections (see prop. \ref{p. C[w] cone-1}), this is equivalent to the following fact.
Fix some $w$ in the interior of $\sigma$ and let $>$ be any monomial order on $\Su$. Let $G$ be the reduced
Gr\"obner basis of $J_n$ with respect to $>_{w}$ (recall that $>_w$ is defined as $x^u>_wx^v$ if $u\cdot w>v\cdot w$
or $u\cdot w=v\cdot w$ and $u>v$). Then $in_w(J_n)\neq in_{w'}(J_n)$ for some $w'\in\sigma$ if and only if $in_w(g)\neq in_{w'}(g)$ for some $g\in G$.

\begin{rem}
We could formulate a similar question for ideals other than $J_n$, for $n\geq1$. Is it true that the fact
that the Gr\"obner fan of some ideal in $\Su$ does not subdivide $\sigma$ implies that $\sigma$ is regular?
The answer is no in general. Take for instance any monomial ideal. Any minimal monomial basis is already the
reduced Gr\"obner basis with respect to an order $>_w$ with $w\in\sigma$. The initial parts of these monomials
are trivially preserved when varying $w\in\sigma$. However, this does not imply regularity of $\sigma$.
But even for non-monomial ideals, something similar happens.
Consider the ideal $J_0$. Here the generators $\{x^{a_1}-1,\ldots,x^{a_s}-1\}$
form the reduced Gr\"obner basis of $J_0$ with respect to any $w\in\sigma$ and they also trivially satisfy the
conditions on the initial parts but this does not imply regularity of the cone $\sigma$.
\end{rem}

The strategy for the proof of the analogue of Nobile's theorem is to find an element of the reduced Gr\"obner basis
whose initial part changes as we vary $w\in\sigma$. To illustrate the method we consider the following family of
normal toric surfaces.

\begin{pro}
Let us consider the $A_m$-singularity, and let $k[x,x^my^{m+1},xy]$ be its ring of regular functions.
Let $J_n=\langle x-1, x^my^{m+1}-1,xy-1\rangle^{n+1}$. Then $\GF$ defines a non-trivial subdivision.
\end{pro}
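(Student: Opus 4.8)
The plan is to produce an explicit element of the reduced Gröbner basis of $J_n$ whose initial form with respect to $w$ genuinely depends on $w$ as $w$ ranges over the relative interior of $\sigma$. Recall that for the $A_m$-singularity the cone $\sigma$ is two-dimensional, $\sd=\R_{\geq0}(a_1,a_2,a_3)$ with $a_1=(1,0)$, $a_2=(m,m+1)$, $a_3=(1,1)$, so $\Su=k[x,x^my^{m+1},xy]$. The natural candidate to work with is the degree-$(n+1)$ element
$$
g:=(xy-1)^{n+1}\in J_n,
$$
or, more precisely, the element of the reduced Gröbner basis obtained from it; since $xy$ is the ``middle'' generator, its weight $w\cdot a_3=w_1+w_2$ behaves differently from $w\cdot a_1=w_1$ and $w\cdot a_2=mw_1+(m+1)w_2$, and this is exactly what will force the initial part to change.

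First I would set up the interior of $\sigma$ explicitly. Since $\sigma$ is dual to $\sd$ generated by $a_1,a_2,a_3$, an interior vector $w=(w_1,w_2)$ satisfies $w\cdot a_i>0$ for all $i$, i.e. $w_1>0$, $w_1+w_2>0$, and $mw_1+(m+1)w_2>0$; because $a_3$ lies in the interior of the cone spanned by $a_1,a_2$ (note $(m+1)a_1+a_2=(2m+1,m+1)$ while $a_3$ is parallel to... one checks $a_3=(1,1)$ sits strictly between the rays $\R_{\geq0}(1,0)$ and $\R_{\geq0}(m,m+1)$ for $m\geq1$), the sign of $w_2$ is \emph{not} determined by $w\in\mathrm{int}(\sigma)$: there are interior weights with $w_2>0$ and interior weights with $w_2<0$. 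This is the geometric heart of the argument. Next I would compute, for such a $w$, the reduced Gröbner basis of $J_n$ with respect to $>_w$ far enough to locate the basis element $g_w$ coming from powers of $xy-1$. Expanding $(xy-1)^{n+1}=\sum_{j=0}^{n+1}\binom{n+1}{j}(-1)^{n+1-j}x^jy^j$, the monomials appearing are $1=x^0y^0,\,xy,\,x^2y^2,\dots,x^{n+1}y^{n+1}$, all of which lie in $\Su$ (each is a power of the generator $xy$), and $w\cdot(j,j)=j(w_1+w_2)$. Hence $in_w\big((xy-1)^{n+1}\big)=(xy)^{n+1}$ when $w_1+w_2>0$ and $in_w\big((xy-1)^{n+1}\big)=(-1)^{n+1}$ when $w_1+w_2<0$; but $w_1+w_2>0$ holds throughout $\mathrm{int}(\sigma)$, so this particular binomial alone does not yet exhibit the change — I must instead track how reduction against the other generators $x-1$ and $x^my^{m+1}-1$ alters $g$.

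The key step, then, is the following: after reducing $(xy-1)^{n+1}$ modulo the ideal using $(x-1)^{n+1}$, $(x^my^{m+1}-1)^{n+1}$ and the mixed products, one obtains a reduced basis element $g_w$ that still contains the ``constant'' monomial and a monomial of the form $x^{e_1}y^{e_2}$ with $e_1\neq e_2$ — concretely, reducing $x^{n+1}y^{n+1}$ using $xy-1\cdots$ is not allowed once $xy-1$ has been replaced by higher-order relations in $J_n$, so the tail of $g_w$ will involve monomials $x^ay^b\in\Su$ with $w\cdot(a,b)$ a nonconstant linear functional of $w$ whose comparison with $0$ (the weight of the constant term) flips sign as $w_2$ passes through the relevant hyperplane inside $\mathrm{int}(\sigma)$. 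I expect the main obstacle to be precisely the bookkeeping of this reduction: one must verify that the relevant monomial survives into the \emph{reduced} Gröbner basis (is not itself a leading term of another basis element) and that its $w$-weight is not identically equal to the weight of the constant term on all of $\mathrm{int}(\sigma)$. A clean way to finish, once such $g_w=c_0+\cdots+c_\beta x^\beta+\cdots$ with $\beta=(\beta_1,\beta_2)$, $\beta_1\neq\beta_2$, $c_0,c_\beta\neq0$ is in hand, is: pick $w,w'\in\mathrm{int}(\sigma)$ on opposite sides of the hyperplane $\{w_1\beta_1+w_2\beta_2=0\}$ (this hyperplane meets $\mathrm{int}(\sigma)$ precisely because $a_3=(1,1)$ is interior and $\beta$ is not a positive multiple of $(1,1)$); then $in_w(g_w)$ and $in_{w'}(g_w)$ differ — one contains $x^\beta$, the other the constant — hence $in_w(J_n)\neq in_{w'}(J_n)$. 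By the reformulation recalled just before the proposition (via Proposition \ref{p. C[w] cone-1}), this shows $\GF$ strictly subdivides $\sigma$, i.e. it has no trivial subdivision.
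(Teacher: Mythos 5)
Your proposal does not close the proposition: the entire content is deferred at exactly the point where the work lies. You reduce everything to the assertion that, after reducing $(xy-1)^{n+1}$ modulo $J_n$, the reduced Gr\"obner basis contains an element $g_w$ carrying both a constant term and a monomial $x^\beta$ with $\beta$ not proportional to $(1,1)$, and you explicitly leave the ``bookkeeping'' of this reduction unverified. But proving that some element of the reduced basis really has an initial form that moves with $w$ \emph{is} the proposition; nothing in the sketch establishes it. The paper's proof supplies precisely this missing content, and it anchors the argument not at the interior generator $xy$ but at the ray generator $x$: since $(x-1)^{n+1}\in J_n$, some $g\in G$ has $lt_{>_{w_0}}(g)=x^p$; if $g$ contains a monomial that is not a power of $x$, then letting $w$ travel from near one facet of $\sigma$ to near the other changes $in_w(g)$, because the weight of $x$ degenerates on a facet; and the remaining possibility that $g$ is a polynomial in $x$ alone is excluded by a division argument forcing $g=(x-1)^p$, by Lemma \ref{l. p<n+1} when $p<n+1$, and, when $p=n+1$, by the explicit telescoping construction of $h_1=x^my^{m+1}+x-(m+1)xy+(m+1)-2\in J_1$, which yields $h=(x-1)^{n-1}h_1\in J_n$ with leading term $x^n$ and contradicts reducedness. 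No substitute for this case analysis appears in your argument.

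Moreover, the finishing device you propose is not merely unverified but incorrect as stated. Every monomial occurring in an element of $\Su$ has exponent in the semigroup $\sd\cap\Z^2$, and for $w$ in the relative interior of $\sigma$ one has $w\cdot u>0$ for every nonzero $u\in\sd$. Hence the hyperplane $\{w\cdot\beta=0\}$ meets $\sigma$ only along its boundary (your claim that it crosses the interior because $\beta$ is not a multiple of $(1,1)$ is false), and the constant term of $g_w$ can never appear in $in_w(g_w)$ for interior $w$. The comparison that could produce a change of initial form is between $x^\beta$ and the actual leading monomial (a power of $xy$, of weight $p(w_1+w_2)>0$ throughout the interior), i.e.\ the relevant wall would be $\{w\cdot\beta=p(w_1+w_2)\}$, and you have given no reason such a wall is crossed, nor that such a $\beta$ survives into the \emph{reduced} basis. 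This is symptomatic of the choice of anchor: the exponent $(1,1)$ of $xy$ lies in the interior of $\sd$, so its weight stays comparable on both facets of $\sigma$ (likewise, the observation that $w_2$ can change sign in the interior is not by itself the operative dichotomy), whereas the paper exploits the generator $x$ whose weight can be made arbitrarily small relative to the others near one facet --- which is exactly what forces the initial form to change in its case (1).
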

\begin{proof}
Let $\sigma\subset\R^2$ be the cone generated by $(0,1)$ and $(m+1,-m)$. Denote by $R_1$ and $R_2$ the rays
generated by $(0,1)$ and $(m+1,-m)$, respectively. Fix some $w_0$ in the relative interior of $\sigma$ and
sufficiently close to $R_2$. Let $>$ be any monomial order on $k[x,x^my^{m+1},xy]$ and let $G$ be the reduced
Gr\"obner basis of $J_n$ with respect to $>_{w_0}$. We are going to show that there exists some $g\in G$ such
that its initial part changes as we vary $w\in\sigma$.
\\
\\
Since $(x-1)^{n+1}\in J_n$, there exists $g\in G$ such that $lt_{>_{w_0}}(g)|x^{n+1}$, i.e., $lt_{>_{w_0}}(g)=x^p$, $p\leq n+1$.
We consider two cases:
\begin{itemize}
\item[(1)] First suppose there is another monomial in $g$ different from a power of $x$. Since there are only
a finite number of monomials in $g$, then if $w_0$ is sufficiently close to $R_2$ we have that $in_{w_0}(g)=x^p$.
But now by taking $w$ sufficiently close to $R_1$, we have $in_{w}(g)\neq x^p$. This implies that $\GF$ defines a
non-trivial subdivision of $\sigma$.
\item[(2)] Now suppose that $g=x^p+\alpha_1 x^{p-1}+\cdots+\alpha_{p-1}x+\alpha_p$. Applying the division
algorithm to $(x-1)^{n+1}$ and $g$ we obtain $(x-1)^{n+1}=g\cdot q+r$, where $r=0$ or $r\neq0$ and $\deg_x r<\deg_x g$.
If $r\neq0$ then there is some $g'\in G$, $g'\neq g$ such that $lt_{>_{w_0}}(g')|lt_{>_{w_0}}(r)$ which implies that
$lt_{>_{w_0}}(g')|lt_{>_{w_0}}(g)$, contradicting the fact that $G$ is reduced. Therefore $r=0$ and so $g=(x-1)^p$.
Once again, we consider two cases:
\end{itemize}

\begin{itemize}
\item[(2.1)] Suppose $p<n+1$. In particular, $g=(x-1)^p\in J_n$ but this is impossible by lemma
\ref{l. p<n+1}, proved below.
\item[(2.2)] Suppose $p=n+1$. We are going to show that there is an element $h\in J_n$ such that
$lt_{>_{w_0}}(h)=x^n$, which again contradicts the fact that $G$ is reduced. We proceed by induction on $n$.
First we show that there is an element $h_1\in J_1$ such that $lt_{>_{w_0}}(h_1)=x$. Assume for the moment
that such an element exists.
Let $h_i:=(x-1)\cdot h_{i-1}\in J_i$, $i\geq2$. Then, by induction, $lt_{>_{w_0}}(h_i)=x^i$. Now we prove that such an $h_1$ exists.
Let $n=1$ and consider the following telescopic sums:
\begin{align}
x^{m+1}y^{m+1}&+1=(xy-1)\cdot\Big[\sum_{j=0}^m (xy)^{m-j}\Big]+2\notag\\
&=(xy-1)\cdot\Big[(xy-1)\cdot\Big(\sum_{j=1}^{m}j\cdot(xy)^{m-j}\Big)+(m+1)\Big]+2\notag\\
&=(xy-1)^2\cdot\Big(\sum_{j=1}^{m}j\cdot(xy)^{m-j}\Big)+(xy-1)\cdot(m+1)+2.\notag
\end{align}
This implies:
\begin{align}
x^{m+1}y^{m+1}-x^m&y^{m+1}-x+1=(xy-1)^2\cdot\Big(\sum_{j=1}^{m}j\cdot(xy)^{m-j}\Big)\notag\\
&-x^my^{m+1}-x+(m+1)\cdot xy-(m+1)+2.\notag
\end{align}
The term on the left equals $(x-1)\cdot(x^my^{m+1}-1)\in J_1$. Since $(xy-1)^2$ is also in $J_1$ we have that
$h_1:=x^my^{m+1}+x-(m+1)\cdot xy+(m+1)-2\in J_1$. If $w_0$ is sufficiently close to $R_2$, then $in_{w_0}(h_1)=x$ and so
$lt_{>_{w_0}}(h_1)=x$, as desired.
\end{itemize}
Therefore, by $(2.1)$ and $(2.2)$, case $(2)$ is impossible. By case $(1)$ we are done.
\end{proof}

\begin{rem}
Notice that the previous proof is also valid for any normal toric surface, since, according to \cite{O},
Proposition 1.21, there is an identical relation to that of $x, xy, x^my^{m+1}$, among any three
consecutive generators in the minimal generating set of the semigroup associated to the toric surface.
\end{rem}

Now we move into the general case. As before, let $\sigma\subset\R^d$ be a strictly convex rational polyhedral
cone of dimension $d$ and such that $\sd\subset\R^d_{\geq0}$. Let $\{a_1,\ldots,a_s\}\subset\Z^d_{\geq0}$ be the
minimal set of generators of $\sd\cap\Z^d$. We need two preliminary lemmas.
\\
\\
According to \cite{CLS}, Proposition 1.2.23, the set $\{a_1,\ldots,a_s\}$, contains the ray generators of the edges
of $\sd$ which we denote, after renumbering if necessary, by $\{a_1,\ldots,a_r\}$, as well as possibly some points
in the relative interior of $\{\sum_{i=1}^{r}\lambda_ia_i|0\leq\lambda_i<1\}$. Since $\sd$ has dimension $d$, we
must have $r\geq d$. Let us assume that $\sigma$ is not a regular cone.

\begin{lem}\label{l. h in J_n}
There exist $h\in J_n$ and some $w$ in the relative interior of $\sigma$ such that $lt_{>_w}(h)=(x^{a_i})^n$,
for some $i\in\{1,\ldots,r\}$.
\end{lem}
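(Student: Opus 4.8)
\noindent\textit{Sketch of a proof.} The plan is to reduce to $n=1$ and then produce $h$ by ``linearizing'' a monomial relation among the generators $x^{a_1},\dots,x^{a_s}$ of $k[A]$; such a relation exists precisely because $\sigma$, equivalently $\sd$, is not regular. For the reduction, suppose that $w$ lies in the relative interior of $\sigma$, that $i_0\in\{1,\dots,r\}$, and that $h_1\in J_1$ has $lt_{>_w}(h_1)$ equal to a nonzero scalar multiple of $x^{a_{i_0}}$. Since $w\cdot a_{i_0}>0$, one has $in_w\big((x^{a_{i_0}}-1)^{n-1}\big)=(x^{a_{i_0}})^{n-1}$, hence $lt_{>_w}\big((x^{a_{i_0}}-1)^{n-1}\big)=(x^{a_{i_0}})^{n-1}$. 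As $k[A]$ is a domain and $>_w$ is a monomial order, $lt_{>_w}$ is multiplicative, so, after rescaling, $h:=(x^{a_{i_0}}-1)^{n-1}h_1$ satisfies $lt_{>_w}(h)=(x^{a_{i_0}})^{n}$; moreover $h\in J_0^{\,n-1}\cdot J_0^{\,2}=J_0^{\,n+1}=J_n$. Thus it suffices to treat $n=1$.

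To build $h_1$, I would use that the Hilbert basis $\{a_1,\dots,a_s\}$ of $\sd$ satisfies $s>d$ (this fails only when $\sd$, hence $\sigma$, is regular). If $r>d$, the edge generators $a_1,\dots,a_r$, living in $\R^d$, are $\Q$-linearly dependent, so there is an integral relation $\sum_{k\in P}c_k a_k=\sum_{k\in Q}c'_k a_k$ with $P,Q\subseteq\{1,\dots,r\}$ disjoint, all $c_k,c'_k>0$, and $P,Q\neq\emptyset$ (nonemptiness because $\sd\subseteq\R^d_{\geq0}$ is pointed). If $r=d$, then $s>d=r$, so there is a non-edge generator $a_{d+1}$; writing $a_{d+1}=\sum_{i=1}^{d}\lambda_i a_i$ ($\lambda_i\in\Q_{\geq0}$) and clearing denominators, $N a_{d+1}=\sum_{i=1}^{d}\mu_i a_i$ with $\mu_i\in\Z_{\geq0}$ and $N\geq1$ minimal; minimality of the Hilbert basis forces $N\geq2$. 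Here I set $P=\{d+1\}$, $c_{d+1}=N$, $Q=\{\,i\leq d:\mu_i>0\,\}\neq\emptyset$, $c'_i=\mu_i$. In either case one obtains a monomial identity $\prod_{k\in P}(x^{a_k})^{c_k}=\prod_{k\in Q}(x^{a_k})^{c'_k}$ in $k[A]$. Substituting $x^{a_k}=1+(x^{a_k}-1)$ and expanding, each side is congruent to $1+\sum c_k(x^{a_k}-1)$, resp.\ $1+\sum c'_k(x^{a_k}-1)$, modulo $J_0^2=J_1$; subtracting,
$$h_1:=\sum_{k\in P}c_k(x^{a_k}-1)-\sum_{k\in Q}c'_k(x^{a_k}-1)\in J_1,$$
and the support of $h_1$ consists of the distinct monomials $x^{a_k}$ ($k\in P\cup Q$) together with, at most, the constant $1$.

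Finally I would choose $w$. For $w$ in the relative interior of $\sigma$ one has $w\cdot a_k>0$ for every $k$ (since $a_k\in\sd\setminus\{0\}$ and $\sigma=\sd^{\vee}$), so the constant term of $h_1$ can never be its leading term; hence $lt_{>_w}(h_1)$ is a scalar multiple of $x^{a_{k^{*}}}$, where $a_{k^{*}}$ is the $>_w$-largest among the $a_k$, $k\in P\cup Q$. If $r>d$ this index $k^{*}$ automatically lies in $\{1,\dots,r\}$, so any $w$ in the relative interior of $\sigma$ works. If $r=d$ one must rule out $k^{*}=d+1$: were $lt_{>_w}(h_1)$ a multiple of $x^{a_{d+1}}$ for \emph{every} $w$ in the relative interior of $\sigma$, then $w\cdot a_{d+1}\geq w\cdot a_i$ for all $i\in Q$ and all such $w$, hence for all $w\in\sigma$ by continuity and density, so $a_{d+1}-a_i\in\sigma^{\vee}=\sd$; but $a_{d+1}-a_i\in\Z^d$ and $a_{d+1}$ is indecomposable in the semigroup $\sd\cap\Z^d$, so $a_{d+1}=a_i$, contradicting the distinctness of the generators. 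Therefore for a generic $w$ in the relative interior of $\sigma$ we get $lt_{>_w}(h_1)$ equal to a multiple of $x^{a_{i_0}}$ with $i_0\leq d$, and the reduction above finishes the proof. The main obstacle, I expect, is exactly this last exclusion in the simplicial case: extracting $a_{d+1}-a_i\in\sd$ from the hypothesis that $a_{d+1}$ perpetually dominates, deriving a contradiction with the minimality of the Hilbert basis, and keeping the duality $\sigma=\sd^{\vee}$ and the passage from the relative interior of $\sigma$ to its closure precise.
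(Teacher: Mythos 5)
Your proposal is correct, and its skeleton is the same as the paper's: reduce to $n=1$ by multiplying an $h_1\in J_1$ whose leading term is (a multiple of) $x^{a_{i_0}}$ by $(x^{a_{i_0}}-1)^{n-1}$, and produce $h_1$ as the linear part, modulo $J_0^2=J_1$, of a binomial relation among the generators, which exists because $s>d$ once $\sigma$ is not regular (you linearize directly in $k[A]$ via $x^{a_k}=1+(x^{a_k}-1)$, the paper does the same thing upstairs in $k[y_1,\ldots,y_s]$ through $\ker\phi$ and the substitution $y_i\mapsto y_i'+1$; these are equivalent). Where you genuinely diverge is the choice of $w$ in the simplicial case $r=d$. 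The paper is constructive: it takes the facet $H$ spanned by $a_1,\ldots,a_{d-1}$, a $w\in\sigma$ with $w^{\perp}=H$, and perturbs into the interior, using $a_{d+1}=\sum_i\lambda_ia_i$ with $0\leq\lambda_i<1$ (minimality of the Hilbert basis) to force $w'\cdot a_{d+1}<w'\cdot a_d$ and hence $lt_{>_{w'}}(h_1)=x^{a_d}$. You argue by contradiction instead: if $x^{a_{d+1}}$ dominated for every interior $w$, then $a_{d+1}-a_i\in\sigma^{\vee}=\sd$ for $i\in Q$, contradicting indecomposability of $a_{d+1}$ in $\sd\cap\Z^d$. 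Both hinge on the same underlying fact (minimality of $a_{d+1}$ as a semigroup generator), but yours is coordinate-free, needs no perturbation estimates, and quietly sidesteps the boundary subtlety in the paper's sub-case $a_{d+1}\in H$ (where the chosen $w$ lies on a ray of $\sigma$ rather than in its relative interior), while the paper's version exhibits an explicit $w$, which is useful computationally. A further minor difference: for $r>d$ you take a relation purely among the ray generators, so the leading monomial is of the required form for every interior $w$, whereas the paper's relation always involves $a_{d+1}$ (harmless there, since $a_{d+1}$ is itself a ray generator in that case).
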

\begin{proof}
We proceed by induction on $n$. We are going to show that there exist $h_1\in J_1$ and some $w\in\sigma$ such
that $lt_{>_w}(h_1)=x^{a_i}$ for some $i\in\{1,\ldots,r\}$. Assume for the moment that such $h_1$ and $w$ exist.
Let $h_l=(x^{a_i}-1)\cdot h_{l-1}\in J_l$, $l\geq2$.
Then, by induction, $lt_{>_w}(h_l)=(x^{a_i})^l$. Now we prove that such $h_1$ and $w$ exist. Let $n=1$ and
consider the following map of $k$-algebras:
$$\phi:k[y_1,\ldots,y_s]\rightarrow\Su,\mbox{ }\mbox{ }\mbox{ }y_i\mapsto x^{a_i}.$$
Let $\overline{J_1}:=\langle y_1-1,\ldots,y_s-1 \rangle^2+\ker\phi$. Since $\sigma$ is not a regular cone, we must
have $s>d$. Consider a subset of $\{a_1,\ldots,a_r\}$ consisting of $d$ linearly independent elements (such a subset
exists since $\sd$ has dimension $d$). After renumbering, if necessary, we may assume that this subset is $\{a_1,\ldots,a_d\}$.
Let $A$ be the transpose of the matrix whose rows are $a_1,\ldots,a_d$, in this order. Let $\lambda':=(\lambda_1',\ldots,\lambda_d')$
be the solution of the equation $A z=a_{d+1}$, i.e., $\lambda'=A^{-1}a_{d+1}$. The entries of $A$ are all integers
as well as those of $a_{d+1}$, whence $\lambda'\in\Q^d$. By multiplying by suitable integers and after renumbering,
if necessary, we obtain the following relation:
$$\lambda_1a_1+\cdots+\lambda_ta_t=\lambda_{t+1}a_{t+1}+\cdots+\lambda_{d+1}a_{d+1},$$
where $\lambda_i\in\Z_{\geq0}$ for all $i$, and for some $t\in\{1,\ldots,d\}$. This implies that
$y_1^{\lambda_1}\cdots y_t^{\lambda_t}-y_{t+1}^{\lambda_{t+1}}\cdots y_{d+1}^{\lambda_{d+1}}\in\ker\phi.$
\\
\\
Consider the change of coordinates $y_i\mapsto y_i'+1$. Then
$$(y_1'+1)^{\lambda_1}\cdots (y_t'+1)^{\lambda_t}-(y_{t+1}'+1)^{\lambda_{t+1}}\cdots (y_{d+1}'+1)^{\lambda_{d+1}}$$
belongs to $K$, where $K$ is the image of $\ker\phi$ under the change of coordinates, and consequently, it also belongs
to $\langle y_1',\ldots,y_s'\rangle^2+K$.
Since $\langle y_1',\ldots,y_s'\rangle^2$ contains all monomials of degree two in the variables $y_i'$, the polynomial
$\delta_1y_1'+\cdots+\delta_{d+1}y_{d+1}'$ is also in $\langle y_1',\ldots,y_s'\rangle^2+K$, for some non-zero
coefficients $\delta_i$. Undoing the change of coordinates, we obtain $\tilde{h}:=\delta_1y_1+\cdots+\delta_{d+1}y_{d+1}+c\in\overline{J_1}$,
where $c$ is a constant. Hence $h_1:=\phi(\tilde{h})=\delta_1x^{a_1}+\cdots+\delta_{d+1}x^{a_{d+1}}+c\in J_1$. Now consider two cases (recall
that $r$ denotes the number of edges of $\sd$):
\begin{itemize}
\item[(1)]If $r>d$ then $a_{d+1}\in\{a_1,\ldots,a_r\}$. Consequently, $lt_{>_w}(h)=x^{a_i}$, for
some $i\in\{1,\ldots,r\}$ and any $w\in\sigma$, as desired.
\item[(2)]Suppose that  $r=d$ and recall that $\{a_1,\ldots,a_s\}$ is the minimal set of generators of
$\sd\cap\Z^d$. In particular,
$a_{d+1}=\sum_{i=1}^{d}\lambda_ia_i$, where $0\leq\lambda_i<1$.
Denote by $H$ the hyperplane generated by $\{a_1,\ldots,a_{d-1}\}$. Then $H\cap\sd$ is a facet of $\sd$, i.e.,
there exists $w\in\sigma$ such that $w^{\perp}=H$. In particular, $w\cdot a_i=0$ for $i=1,\ldots,d-1$, and $w\cdot a_d>0$.
If $a_{d+1}\in H$ then $lt_{>_w}(h)=x^{a_d}$, as desired. Otherwise, $w\cdot a_{d+1}>0$. Now we choose $w'$
sufficiently close to $w$ in the relative interior of $\sigma$ and such that $0<w'\cdot a_i<w'\cdot a_d$ and $0<w'\cdot a_i<w'\cdot a_{d+1}$
for all $i=1,\ldots,d-1$. We know that $a_{d+1}=\sum_{i=1}^{d}\lambda_ia_i$, where, in particular, $0<\lambda_d<1$.
This fact allow us to choose $w'$ satisfying also $w'\cdot a_{d+1}<w'\cdot a_d$. Therefore, $lt_{>_{w'}}(h)=x^{a_d}$.
This concludes the proof of the lemma.
\end{itemize}
\end{proof}

\begin{lem}\label{l. p<n+1}
If $p<n+1$, then $(x^{a_i}-1)^p\notin J_n$, for every $i$.
\end{lem}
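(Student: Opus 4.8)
The plan is to recognize $J_n$ as the $(n+1)$st power of the maximal ideal $\m:=\langle x^{a_1}-1,\ldots,x^{a_s}-1\rangle$ of $R:=\Su$ at the torus point $\mathbf 1$, and to exploit that $R$ is regular at $\m$ because $\mathbf 1$ lies in the dense torus $\T$, hence in the smooth locus of $X$. Thus $J_n=\m^{n+1}$, and since $\m$ is a maximal ideal, localization commutes with taking powers; so it is enough to prove $(x^{a_i}-1)^p\notin(\m R_\m)^{n+1}$ in the regular local ring $R_\m=\Ox_{X,\mathbf 1}$. Because $p<n+1$ gives $(\m R_\m)^{n+1}\subseteq(\m R_\m)^{p+1}$, I would instead prove the stronger statement $(x^{a_i}-1)^p\notin(\m R_\m)^{p+1}$.

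Next I would use that the associated graded ring of the regular local ring $R_\m$ for the $\m R_\m$-adic filtration is a polynomial ring over $k$, hence a domain, so the $\m R_\m$-adic order is additive. It then suffices to check that $x^{a_i}-1$ has order exactly $1$, i.e. $x^{a_i}-1\in\m R_\m\setminus(\m R_\m)^2$, for then $(x^{a_i}-1)^p$ has order $p<p+1$. To verify this I would pass to the torus: because $\mathbf 1\in\T=\Sp k[\Z^d]$, the ring $R_\m$ is the localization of $k[t_1^{\pm1},\ldots,t_d^{\pm1}]$ at the maximal ideal generated by $t_1-1,\ldots,t_d-1$, which therefore form a regular system of parameters of $R_\m$ and a $k$-basis of $\m R_\m/(\m R_\m)^2$. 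Writing $x^{a_i}=\prod_j t_j^{a_{i,j}}$ and using the congruence $t_j^{c}-1\equiv c\,(t_j-1)\pmod{(\m R_\m)^2}$ valid for all $c\in\Z$ (each $t_j$ being a unit congruent to $1$ modulo $\m R_\m$), I obtain $x^{a_i}-1\equiv\sum_j a_{i,j}(t_j-1)\pmod{(\m R_\m)^2}$. Since $a_i$ is a nonzero generator of $\sd\cap\Z^d$ and $\operatorname{char}k=0$, some $a_{i,j}$ is nonzero in $k$, so this class is nonzero in $\m R_\m/(\m R_\m)^2$, giving $x^{a_i}-1\notin(\m R_\m)^2$ and hence the claim.

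The routine parts will be the congruence $t_j^c-1\equiv c(t_j-1)$ modulo the square of the maximal ideal and the commutation of localization with powers of $\m$. The only point that needs care is the reduction to $R_\m$: that $\mathbf 1$ is a smooth point of $X$ forces $R_\m$ to be regular, so its associated graded ring is a polynomial ring and orders add, and that $\m^{n+1}\subseteq(\m^{n+1})_\m=(\m R_\m)^{n+1}$ so nothing is lost by localizing. I expect this to be the main (though mild) obstacle. The hypothesis $p<n+1$ enters exactly once, to pass from $(\m R_\m)^{p+1}$ to $(\m R_\m)^{n+1}$; for $p=n+1$ the statement is of course false since $(x^{a_i}-1)^{n+1}\in J_n$ trivially.
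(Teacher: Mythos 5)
Your proof is correct, but it takes a genuinely different route from the paper. You localize at the smooth torus point $\mathbf 1$: identifying $J_n=\m^{n+1}$ with $\m=\langle x^{a_1}-1,\ldots,x^{a_s}-1\rangle$ the maximal ideal at $\mathbf 1$, you pass to the regular local ring $\Ox_{X,\mathbf 1}$ (the localization of $k[t_1^{\pm1},\ldots,t_d^{\pm1}]$ at $(t_1-1,\ldots,t_d-1)$), use that its associated graded ring is a polynomial ring, hence a domain, so $\m$-adic orders are additive, and compute that $x^{a_i}-1$ has order exactly $1$ because its class in $\m R_\m/(\m R_\m)^2$ is $\sum_j a_{i,j}(t_j-1)\neq 0$ (here $a_i\neq 0$ and $\operatorname{char}k=0$ are used); thus $(x^{a_i}-1)^p$ has order $p<n+1$ and cannot lie in $\m^{n+1}$. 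The paper instead argues directly and self-containedly inside $\Su$: assuming $(x^{a_i}-1)^p=\sum h_tf_t$ with each $f_t$ a product of $n+1$ generators, it differentiates $p$ times with respect to a variable $x_1$ with $a_{i,1}>0$ and evaluates at $(1,\ldots,1)$; every term on the right still carries at least $n+1-p>0$ factors of the form $x^{a_j}-1$, hence vanishes, while the left-hand side produces a nonzero constant. Conceptually both proofs say that the order of vanishing of $(x^{a_i}-1)^p$ at the smooth point $\mathbf 1$ is only $p$; your version is the more structural one (and gives the exact order, making the role of characteristic zero transparent through the nonvanishing of the integers $a_{i,j}$ in $k$), while the paper's is elementary, needing nothing beyond the Leibniz rule and avoiding any appeal to regularity or associated graded rings. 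The only points you should spell out in a final write-up are the passage from the one-variable congruence $t_j^c-1\equiv c(t_j-1)$ to the product congruence for $x^{a_i}-1$ (a routine telescoping with units congruent to $1$), and the remark that the image of $\m^{n+1}$ in $R_\m$ lands in $(\m R_\m)^{n+1}$, which is all the localization step requires.
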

\begin{proof}
For convenience of notation, we take $i=1$ and we assume that $a_{11}>0$. Let
$f_t:=(x^{a_1}-1)^{t_1}\cdot(x^{a_2}-1)^{t_2}\cdots(x^{a_s}-1)^{t_s},$ where $\sum_j t_j=n+1$. Suppose that
\begin{align}
(x^{a_1}-1)^p=\sum h_tf_t,\label{e. p<n+1}
\end{align}
for some $h_t\in\Su$. We will get a contradiction by taking derivatives with respect to $x_1$. When we take the
first derivative with respect to $x_1$ of $\sum h_tf_t$, every summand $h_tf_t$ produces two summands, according
to Leibniz' rule of derivation. Each of these new summands contains a factor $(x^{a_1}-1)^{r_1}\cdot(x^{a_2}-1)^{r_2}\cdots(x^{a_s}-1)^{r_s}$,
where $n\leq\sum_jr_j\leq n+1$. Continuing this way, after differentiating $p$ times with respect to $x_1$, every summand
in the resulting sum contains a factor $(x^{a_1}-1)^{r_1}\cdot(x^{a_2}-1)^{r_2}\cdots(x^{a_s}-1)^{r_s}$, where $0<n+1-p\leq\sum_jr_j\leq n+1$.
\\
\\
On the other hand, the first derivative with respect to $x_1$ of $(x^{a_1}-1)^p$ is $(x^{a_1}-1)^{p-1}\cdot m$,
where $m$ is some monomial. The second derivative will produce two summands, each one being a product of
$(x^{a_1}-1)^r$ where $p-1\leq r\leq p$, and some monomial. Continuing this way, after $p-1$ derivations, the
resulting sum consists of summands of the form $(x^{a_1}-1)^r\cdot m$, $1\leq r\leq p$, and where there is
exactly one summand such that $r=1$. The next derivation produces a non-zero monomial plus summands of the
form $(x^{a_1}-1)^r\cdot m$, where $1\leq r\leq p$.
\\
\\
Therefore, after derivating each side of equation (\ref{e. p<n+1}) $p$ times, and evaluating the resulting
polynomials in $(1,1,\ldots,1)$ we obtain zero on the right hand and something different from zero on the left
hand. This is a contradiction.
\end{proof}

Now we are ready to prove the analogue of Nobile's theorem in our context.

\begin{teo}\label{t. analogue Nobile}
Let $X$ be the normal toric variety defined by $\sigma$. Let $\pi_n\circ\eta:\Nn\rightarrow X$ be the normalized
higher Nash blowup of $X$. Then if $X$ is singular, $\pi_n\circ\eta$ is not an isomorphism.
\end{teo}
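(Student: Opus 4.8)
The plan is to reduce the statement to a purely combinatorial fact about the Gröbner fan $\GF$ and then verify that fact using the two lemmas just proved. By Theorem \ref{t. Nash = GF}, the fan $\Sigma$ attached to $\Nn$ equals $\GF$, and as explained in the discussion preceding Lemma \ref{l. h in J_n}, the map $\pi_n\circ\eta$ fails to be an isomorphism as soon as $\GF$ is a non-trivial subdivision of $\sigma$: two distinct cones $\sigma_1\neq\sigma_2$ in $\GF$ with relative interiors inside the relative interior of $\sigma$ produce distinct distinguished points $\gamma_{\sigma_1}\neq\gamma_{\sigma_2}$ which are both sent to $\gamma_\sigma$ by \cite{CLS}, Lemma 3.3.21. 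So the whole theorem comes down to: if $\sigma$ is not regular, then $\GF$ genuinely subdivides $\sigma$, equivalently (by Proposition \ref{p. C[w] cone-1} and the definition of Gröbner fan) there exist $w,w'\in\sigma$ with $in_w(J_n)\neq in_{w'}(J_n)$.

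First I would invoke Lemma \ref{l. h in J_n} to produce an element $h\in J_n$ and a point $w$ in the relative interior of $\sigma$ with $lt_{>_w}(h)=(x^{a_i})^n$ for some edge generator $a_i$, $i\in\{1,\ldots,r\}$. Let $G$ be the reduced Gröbner basis of $J_n$ with respect to $>_w$. Since $lm_{>_w}(h)=(x^{a_i})^n$, there is some $g\in G$ whose $>_w$-leading monomial divides $(x^{a_i})^n$, so $lm_{>_w}(g)=(x^{a_i})^p$ with $p\le n$. Now I would split into cases exactly as in the $A_m$ proposition. If $g$ contains a monomial other than a power of $x^{a_i}$, then choosing $w$ close enough to the ray $R_i$ generated by $a_i$ forces $in_w(g)=(x^{a_i})^p$, while moving $w'$ away from $R_i$ (e.g. towards a different edge) changes $in_{w'}(g)$; hence $in_w(g)\neq in_{w'}(g)$ and we are done. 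If instead every monomial of $g$ is a power of $x^{a_i}$, then $g$ is a univariate polynomial in $x^{a_i}$ with leading term $(x^{a_i})^p$; applying the division algorithm to $(x^{a_i}-1)^{n+1}\in J_n$ by $g$ and using that $G$ is reduced (as in case (2) of the $A_m$ proof) forces $g=(x^{a_i}-1)^p$. By Lemma \ref{l. p<n+1}, $(x^{a_i}-1)^p\in J_n$ is impossible when $p<n+1$, so $p=n+1$; but $p\le n$, a contradiction, ruling out this case entirely.

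There is one gap to close carefully: in the first case I claimed that the non-leading monomials of $g$ let us change the initial form by varying $w$. To make this precise I would argue that since $a_i$ generates an edge of $\sd$, there is a linear functional $w_0\in\sigma$ vanishing on $a_i$ and positive on all other semigroup generators not on that edge; pushing $w$ towards the face $w_0^\perp$ (staying in the relative interior of $\sigma$) makes $w\cdot(n\,a_i)$ small relative to $w\cdot(\text{any monomial not supported on that edge})$, forcing $in_w(g)$ to consist only of powers of $x^{a_i}$, whereas at a generic $w'$ in the interior the full leading monomial $(x^{a_i})^p$ has strictly larger $w'$-weight only if the other monomials have smaller weight — so at least two distinct points give distinct initial forms of $g$, hence distinct $in_\bullet(J_n)$. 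I expect this edge-geometry argument to be the main obstacle, since it requires knowing that an edge generator can be ``isolated'' by a functional in $\sigma$, which is where the hypothesis $\sd$ has dimension $d$ (so $\sigma$ is strictly convex and full-dimensional) and \cite{CLS}, Proposition 1.2.23, enter. Once this is in place, combining the two cases shows $\GF$ is a non-trivial subdivision of $\sigma$, and the reductions above finish the proof.
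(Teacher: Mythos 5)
Your overall strategy is the paper's: reduce via Theorem \ref{t. Nash = GF} and the distinguished-point argument to showing that $\GF$ subdivides $\sigma$ non-trivially, use Lemma \ref{l. h in J_n} to produce an element $g$ of the reduced Gr\"obner basis $G$ (taken with respect to $>_w$ for the $w$ of that lemma) with $lt_{>_w}(g)=(x^{a_i})^p$, $p\le n$, and split according to whether $g$ involves only powers of $x^{a_i}$. Your second case is correct and even slightly tidier than the paper's (since $p\le n$, Lemma \ref{l. p<n+1} alone finishes it). The genuine problem is the first case, the one you flag yourself: as written, your closing argument does not work, for three concrete reasons. First, you cannot ``choose $w$ close enough to the ray $R_i$'': the weight $w$ is the one supplied by Lemma \ref{l. h in J_n}, and both $G$ and the facts $g\in G$, $lt_{>_w}(g)=(x^{a_i})^p$ depend on that $w$; if you move it, you lose $g$. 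Second, the inequality in your patch goes the wrong way for the paper's convention: $in_w(f)$ collects the terms of \emph{maximal} $w$-weight, so making $w\cdot(na_i)$ small relative to the weights of the other monomials forces the powers of $x^{a_i}$ to drop \emph{out} of the initial form, not to constitute it. Third, the inference ``two points give distinct initial forms of $g$, hence distinct $in_\bullet(J_n)$'' is justified by Proposition \ref{p. C[w] cone-1} only when one of the two weights is the $w$ for which $g$ lies in the reduced basis; for two freshly chosen weights it can fail, since initial forms of a fixed element of an ideal may change while the initial ideal does not.

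The repair is short and is exactly the paper's case (1), and it requires no closeness condition at the base point: since $lt_{>_w}(g)=(x^{a_i})^p$ is by definition of $>_w$ a monomial of maximal $w$-weight in $g$, the monomial $(x^{a_i})^p$ automatically appears in $in_w(g)$. The edge geometry is needed only for the second weight: because $\R_{\geq0}a_i$ is a face of $\sd$ and the extra monomial $x^\delta$ of $g$ does not lie on that ray (lattice points of $A$ on the ray are multiples of $a_i$), there exists $w'\in\sigma$ with $w'\cdot(pa_i)=0<w'\cdot\delta$; perturbing, one finds $w''$ in the relative interior of $\sigma$ with $0<w''\cdot(pa_i)<w''\cdot\delta$. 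Then $(x^{a_i})^p$ is not a monomial of $in_{w''}(g)$, so $in_{w''}(g)\neq in_w(g)$, and Proposition \ref{p. C[w] cone-1}, applied at the base point $w$, yields $C[w]\neq C[w'']$, i.e.\ $in_w(J_n)\neq in_{w''}(J_n)$, so $\GF$ subdivides $\sigma$ non-trivially. With this replacement your proof coincides with the paper's.
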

\begin{proof}
Let $w\in\sigma$ be as in lemma \ref{l. h in J_n}. Let $G$ be the reduced Gr\"obner basis of $J_n$ with respect
to $>_w$, where $>$ is any monomial order on $\Su$. By definition, $(x^{a_i}-1)^{n+1}\in J_n$. For each $i$,
there exists $g_i\in G$ such that $lt_{>_w}(g_i)|(x^{a_i})^{n+1}$. For $i\in\{1,\ldots,r\}$, this implies that
$lt_{>_w}(g_i)=(x^{a_i})^{p_i}$, where $p_i\leq n+1$. Now we consider two cases:
\begin{itemize}
\item[(1)]Suppose there is some $i\in\{1,\ldots,r\}$ such that $g_i$ contains some monomial $x^\delta$ that is
not a power of $x^{a_i}$. By definition of $>_w$, $(x^{a_i})^{p_i}$ is a monomial of $in_w(g_i)$. On the other hand,
since $p_i a_i$ is in the ray generated by $a_i$ (which is a ray of the cone $\sd$), there exists $w'\in\sigma$ such
that $w'\cdot(p_i a_i)=0$ and $w'\cdot\delta>0$. Now we choose $w''$ sufficiently close to
$w'$ in the relative interior of $\sigma$ and such that $0<w''\cdot(p_i a_i)<w''\cdot\delta$. This implies
that $(x^{a_i})^{p_i}$ is not a monomial of $in_{w''}(g_i)$. Consequently, $C[w]\neq C[w'']$ and so the
Gr\"obner fan of $J_n$ is not trivial. Here $C[w]$ denotes the equivalence class of $w$ in the Gr\"obner fan
of $J_n$.
\item[(2)]Suppose that
$g_i=(x^{a_i})^{p_i}+\alpha_{i,1}(x^{a_i})^{p_i-1}+\cdots+\alpha_{i,p_i-1}(x^{a_i})+\alpha_{i,p_i}$,
where $i\in\{1,\ldots,r\}$. Applying the division algorithm in one variable we obtain:
$$(x^{a_i}-1)^{n+1}=g_i\cdot q_i+r_i,$$ where $r_i=0$ or $r_i\neq0$ and $\deg_{x^{a_i}}(r_i)<\deg_{x^{a_i}}(g_i)$.
If $r_i\neq0$ for some $i$, the previous equality implies $r_i\in J_n$, and so there exists $g\in G$, $g\neq g_i$
for all $i$, such that $lt_{>_w}(g)|lt_{>_w}(r_i)$. But this implies that $lt_{>_w}(g)|lt_{>_w}(g_i)$, which contradicts
the fact that $G$ is reduced. Therefore $r_i=0$ for all $i$, implying $g_i=(x^{a_i}-1)^{p_i}$,
where $p_i\leq n+1$. By lemma \ref{l. p<n+1}, $p_i$ cannot be smaller than $n+1$, i.e., $p_i=n+1$ for all $i$.
According to lemma \ref{l. h in J_n}, there exists $h\in J_n$ such that $lt_{>_w}(h)=(x^{a_i})^n$. Once again,
this gives a contradiction.
\end{itemize}
By $(1)$ and $(2)$, the Gr\"obner fan of $J_n$ defines a non-trivial subdivision and so $\pi_n\circ\eta$ is not an isomorphism.
\end{proof}

As an immediate consequence, the analogue of Nobile's theorem for higher Nash blowup without normalization is
also true for normal toric varieties.

\begin{coro}\label{c. Nobile without normalization}
Let  $X$ be a normal toric variety and let $(\Na,\pi_n)$ be its $nth$ higher Nash blowup. Then $\pi_n$ is an isomorphism
if and only if $X$ is non-singular.
\end{coro}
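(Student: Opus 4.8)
The plan is to deduce this corollary directly from Theorem \ref{t. analogue Nobile}. One implication is already recorded in the discussion preceding that theorem: if $X$ is non-singular then $\pi_n$ modifies nothing, so it is an isomorphism. For the converse I would argue by contraposition, assuming $X$ singular and showing that $\pi_n$ cannot be an isomorphism.

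So suppose $X$ is singular but $\pi_n:\Na\to X$ were an isomorphism. Then $\Na$ would be isomorphic to $X$, which is a normal variety; in particular $\Na$ itself would be normal. But an irreducible normal variety coincides with its own normalization (normalization being unique), so the normalization map $\eta:\Nn\to\Na$ would also be an isomorphism. Composing, $\pi_n\circ\eta:\Nn\to X$ would be an isomorphism, contradicting Theorem \ref{t. analogue Nobile}, which asserts precisely that $\pi_n\circ\eta$ is not an isomorphism when $X$ is singular. Hence $\pi_n$ is not an isomorphism, and together with the first implication this proves the equivalence.

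The only point requiring a word of care is the step ``$\Na$ normal $\Rightarrow$ $\eta$ is an isomorphism'': this is just the uniqueness of the normalization of an irreducible variety, and $\Na$ is irreducible since it is the closure of the graph of $\delta_n$ over the irreducible variety $X$. There is no real obstacle here; the substance of the corollary is contained in Theorem \ref{t. analogue Nobile}, and the present statement merely removes the normalization from that result via the elementary observation above.
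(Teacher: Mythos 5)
Your argument is correct and matches the paper's own proof: the paper likewise notes that if $\pi_n$ is an isomorphism then $\Na$ is normal, hence $\Nn\simeq\Na$, and then invokes Theorem \ref{t. analogue Nobile} (in contrapositive form) to conclude $X$ is non-singular, the converse direction being the standard fact that $\pi_n$ only modifies singular points. No issues.
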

\begin{proof}
Suppose $\pi_n$ is an isomorphism. In particular, $\Na$ is normal whence $\Nn\simeq\Na$. By the previous theorem,
this implies that $X$ is non-singular.
\end{proof}

\section*{Acknowledgements}
I want to thank Mark Spivakovsky for his encouragement and for having detected some mistakes
in previous versions of this manuscript. I also want to thank Takehiko Yasuda for having
kindly answered several questions I had on the higher Nash blowup. Some ideas presented
here started from discussions with him. Thanks also to Stepan Orevkov, Nikolay Vasiliev,
Dmitry Pavlov, and Marcello Bernardara for stimulating discussions. Finally, I want to
thank the referee for his careful reading and for his suggestion that simplified my original
proof of Theorem \ref{t. Nash = GF}.


\end{document}